\newtheorem{theorem}{Theorem}[section]
\newtheorem{lemma}[theorem]{Lemma}
\newtheorem{proposition}[theorem]{Proposition}
\newtheorem{obs}[theorem]{Observation} \newtheorem{defi}[theorem]{Definition}
\newenvironment{definition}{\begin{defi}\rm}{\end{defi}}
\newtheorem{exa}[theorem]{Example}
\newtheorem{rem}[theorem]{Remark}
\newenvironment{remark}{\begin{rem}\rm}{\end{rem}}
\newtheorem{rems}[theorem]{Remarks}
\newtheorem{ack}[theorem]{Acknowlegment}
\renewcommand{\emph}[1]{{\bf #1}}
\newtheorem{thmx}{Theorem}
\newtheorem{corx}[thmx]{Corollary}
\def\H{\mathcal H}
\def\K{\mathcal K}
\def\B{\mathcal B}
\def\ZZ{{\mathbf Z}}
\def\CC{{\mathbf C}}
\def\RRR{{\mathbf R}}
\def\RR+{{\mathbf R}^*}
\def\kk{k}
\def\Un{\mathbf 1}
\def\Q_p{{\mathbf Q}_p}
\def\Proj{\rm Proj}
\def\Ga{\Gamma}
\def\ga{\gamma}
\def\la{\lambda}
\def\vfi{\varphi}
\def\Tr{{\rm Tr}}
\def\Char{{\rm Char}}
\def\Sub{{\rm Sub}}
\def\tout{\quad\text{for all}\quad}
\def\Un{\mathbf 1}
\begin{document}
\date{\today}

\title[Characters of algebraic groups]{Character rigidity  of simple algebraic groups}

\address{Bachir Bekka \\ Univ Rennes \\ CNRS, IRMAR--UMR 6625\\
Campus Beaulieu\\ F-35042  Rennes Cedex\\
 France}
\email{bachir.bekka@univ-rennes1.fr}
\author{Bachir Bekka}

\thanks{The author acknowledges the support  by the ANR (French Agence Nationale de la Recherche)
through the projects Labex Lebesgue (ANR-11-LABX-0020-01) and GAMME (ANR-14-CE25-0004)}
\begin{abstract}
We prove the following extension of Tits'  simpli\-city theorem.
Let  $\kk$ be an infinite field, $G$ an algebraic group defined 
and quasi-simple over $\kk,$ and $G(\kk)$  the group of $\kk$-rational points
of $G.$ Let $G(\kk)^+$ be  the subgroup of $G(\kk)$ generated by the unipotent radicals
of parabolic subgroups of $G$ defined over $\kk$
and denote by $PG(\kk)^+$ the quotient of $G(\kk)^+$
by its center.

Then  every normalized function of positive type on $PG(k)^+$ which is constant on conjugacy classes  is a convex combination  of $\Un_{PG(\kk)^+}$
and $\delta_e.$
As corollary,  we obtain that, when $\kk$ is countable, the only ergodic  IRS's (invariant random subgroups) of $PG(\kk)^+$ are $\delta_{PG(\kk)^+}$ and $\delta_{\{e\}}.$
A further consequence is that, when $\kk$ is a global field
and $G$ is $\kk$-isotropic and has trivial center, every  measure preserving  ergodic action  of $G(\kk)$ on a probability space either factorizes through 
the abelianization $G(\kk)_{\rm ab}$ or is essentially free.
 \end{abstract}
\let\thefootnote\relax\footnotetext{2010 Mathematics Subject Classification: 20G05, 22D10, 22D25, 22D40}
\maketitle
\section{Introduction}
\label{S0}
Given a  locally compact group $G,$ recall that a continuous function   $\vfi \colon G \to \CC$ is  of positive type (or positive definite) if,  for all $g_1,\dots,g_n \in G$,
the matrix $(\varphi(g_j^{-1} g_i))_{1\leq i,j\leq n}$ is positive semi-definite.
As is well-known, such functions are exactly the diagonal matrix coefficients
of unitary representations of $G$ in  Hilbert spaces (see Section~\ref{SS:PreliminaryResult}). The set  $P(G)$
of functions $\vfi$ of positive type on $G$, normalized by the condition $\vfi(e)=1,$
is convex and  the  extreme points  in $P(G)$ are the  diagonal matrix coefficients,
given by unit vectors,  of  irreducible unitary representations of $G$.

Assume now that $G$ is a discrete group.  By results of Glimm \cite{Glimm} and Thoma \cite{Thoma5}, the classification of the unitary dual $\widehat{G}$, that is, the  set of irreducible unitary representations of $G$ up to unitary equivalence, is hopeless, unless $G$ is virtually abelian. However, the  set 
of characters or traces of $G$, that  we are going to define, seems to be more tractable.
\begin{definition}
\label{Def-Character}
A function $\vfi\in P(G)$ is a \textbf{trace} on $G$ if $\vfi$ is 
central (that is,   constant on conjugacy classes of $G$).
The set $\Tr(G)$ of traces on $G$ is a convex compact subset of 
the unit ball of $\ell^{\infty}(G)$ for the topology of pointwise
convergence. An extreme point of $\Tr(G)$ is a 
\textbf{character} of $G$. We denote by $\Char(G)$ the set 
of characters of $G.$
\end{definition}

\vskip.3cm
Several authors studied the problem of the description of $\Char(G)$
 for various discrete groups (see e.g. \cite{Thoma3}, \cite{Kirillov}, \cite{Ovcinnikov},
 \cite{Skudlarek}, \cite{Bekka},\cite{Dudko-Medynets}, \cite{Peterson},  \cite{Peterson-Thom}).

Traces on groups arise in  various  settings.
An immediate example of a trace on $G$  is the 
usual normalized trace associated to a
finite dimensional unitary representation  of $G$.
More generally, let $(\pi, \H)$ be a unitary representation of 
$G$ such that the von Neumann algebra $\pi(G)''$ of $\B(\H)$ generated by 
$\pi(G)$ admits a finite trace $\tau.$ Then $\vfi_\pi=\tau\circ \pi$
belongs to $\Tr(G)$; moreover, 
$\vfi_\pi\in \Char(G)$ if and only if $\pi(G)''$ is a factor.
Conversely, every $\vfi\in \Tr(G)$ is of the form $\vfi_\pi$
for $\pi$ as above; for all this, see  \cite[Chap. 17, \S. 3]{DixmierC*}.

Another example of a trace on $G$ is the characteristic function $\Un_N$
of a normal subgroup $N$  of $G.$
Indeed, $\Un_H$  belongs to $P(G)$ for any subgroup $H$ of $G,$ since $\Un_H$
is the matrix coefficient associated to the natural representation
of $G$ on $\ell^2(G/H)$ and the unit vector $\delta_{H}\in\ell^2(G/H)$.

In recent years, there has been a strong interest in 
the notion of invariant random subgroups, introduced in \cite{Abert}
(see e.g. \cite{AbertEtAl}, \cite{Bowen}, \cite{Glasner},\cite{Vershik1}). The space $\Sub(G)$  of subgroups  of $G$,  equip\-ped with the topology induced from $\{0, 1\}^G,$ 
is a compact space on which $G$ acts by conjugation. An \textbf{invariant
random subgroup} (or shortly IRS) of $G$  is a $G$-invariant probability measure on $\Sub(G).$  The  Dirac measures on $\Sub(G)$ are given by normal subgroups
of $G,$ which may be viewed as trivial IRS's.
Every  IRS  $\mu$ defines a trace $\vfi_\mu$ on $G$ by 
$$\vfi_\mu(g)=\mu(\{H\in \Sub(G)\mid g\in H\}) \tout g\in G;$$
indeed, $\vfi_\mu$, which is clearly $G$-invariant, belongs to $P(G)$ since
$\vfi_\mu=\int_{\Sub(G)} \Un_{H} d\mu(H)$ is a convex combination
of functions of positive type.
Examples of IRS's arise as follows: let  $G\curvearrowright (X,\nu)$ 
be a measure preserving action on a probability space $(X,\nu);$ the image $\mu$ of 
$\nu$ under the map $X\to \Sub(G), x\mapsto G_x$ is an IRS,
where $G_x$ is the stabilizer of $x$ in $G.$ In fact, as shown in \cite{Abert}, every IRS
is of this form.
An early important result in this context is the work \cite{Stuck-Zimmer}, in which 
it was  shown that, if $G$ is a lattice in a higher rank simple Lie group
and $G\curvearrowright (X,\mu)$ 
an ergodic faithful action with $\mu$ non atomic, then $G\curvearrowright (X,\mu)$ 
is essentially free (that is, $\vfi_\mu=\delta_e$). For a survey on IRS's, see \cite{Gelander}.

 \vskip.3cm
Let  $\kk$ be a field, $G$ an algebraic group defined 
and quasi-simple over $\kk,$ and $G(\kk)$  the group of $\kk$-rational points
of $G.$ Let $G(\kk)^+$ be  the (normal) subgroup of $G(\kk)$ generated by the unipotent radicals of parabolic subgroups of $G$ defined over $\kk.$ (In case $\kk$ is perfect,
$G(\kk)^+$ coincides with the subgroup generated by all unipotent elements in $G(\kk)$; see \cite[\S.6]{Borel-Tits2}.)
The celebrated Tits simplicity theorem \cite{Tits} asserts that,
when $\kk$ has at least 4 elements,  every 
subgroup of $G(\kk)$ which is normalized by $G(\kk)^+$ either 
is contained in the center $Z(G)$ of $G,$ or contains $G(\kk)^+;$  in particular, 
$$PG(\kk)^+:=G(\kk)^+/ Z(G(\kk)^+)$$
is a simple group, where  $Z(G(\kk)^+)$ is the center of $G(\kk)^+.$
We prove the following generalization of this result in the case where $\kk$ is infinite.

Given a group $H$ and a normal subgroup
$N,$ we identify $P(H/N)$  with the subset 
of $P(H)$ of all $\vfi\in P(H)$ with  $\vfi|_N=\Un_N;$
moreover, for a  function $\chi:N\to \CC,$ 
we denote by $\widetilde{\chi}$
its trivial extension to $H,$ defined by $\widetilde{\chi}=\chi$ on $N$
and $\widetilde{\chi}=0$ on $H\setminus N.$
 \begin{thmx}
  \label{Theo-CharSemisimple}
  Let $\kk$ be an infinite field and $G$ an algebraic group defined
and quasi-simple over $\kk.$ Let $\vfi\in P(G(\kk))$ be a function of positive type
on $G(\kk)$ which is conjugation-invariant under $G(\kk)^+.$
 Then $\vfi$ is a convex combination of functions $\vfi_1$ and $\vfi_2$ in $P(G(\kk))$
with $\vfi_1=1$ on $G(\kk)^+$ and $\vfi_2=0$ outside $Z(G)(\kk).$
In particular, we have
$$
\Char(G(\kk))= \Char(G(\kk)/G(\kk)^+) \cup \{\widetilde{\chi}\mid \chi \in \widehat{Z(G)(\kk)}\},$$
where $\widehat{Z(G)(\kk)}$ is the dual group of $Z(G)(\kk)$.

\end{thmx}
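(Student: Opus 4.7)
The plan is to reduce Theorem~\ref{Theo-CharSemisimple} to a character rigidity statement for the simple group $PG(\kk)^+,$ and then to lift the decomposition from $G(\kk)^+$ to $G(\kk).$

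First, since $G(\kk)^+$ is normal in $G(\kk)$ and $\vfi$ is invariant under conjugation by $G(\kk)^+,$ the restriction $\psi:=\vfi|_{G(\kk)^+}$ is a trace on $G(\kk)^+$ in the sense of Definition~\ref{Def-Character}. The space $\Tr(G(\kk)^+)$ is a Choquet simplex whose extreme points are the characters, so $\psi=\int\chi\,d\mu(\chi)$ is an integral over $\Char(G(\kk)^+).$ Inside the GNS representation $(\pi,\H,\xi)$ of $\vfi,$ this integral is realized by the central decomposition of the finite von Neumann algebra $M:=\pi(G(\kk)^+)''$ equipped with the tracial vector state $\tau(\cdot)=\langle\cdot\,\xi,\xi\rangle.$

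The main technical step is the classification $\Char(G(\kk)^+)=\{\Un_{G(\kk)^+}\}\cup\{\widetilde{\omega}\mid\omega\in\widehat{Z(G(\kk)^+)}\}.$ Fix $\chi\in\Char(G(\kk)^+),$ a non-trivial $\kk$-root $\alpha$ with root subgroup $U_\alpha\cong(\kk,+)$ parametrised by $a\mapsto u_\alpha(a),$ and a maximal $\kk$-split torus $T.$ The identity $tu_\alpha(a)t^{-1}=u_\alpha(\alpha(t)a),$ combined with the centrality of $\chi,$ shows that the positive definite function $f(a):=\chi(u_\alpha(a))$ on the discrete abelian group $(\kk,+)$ is invariant under multiplication by every $s\in\alpha(T(\kk))\subset\kk^\times,$ an infinite subgroup. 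By Bochner's theorem, $f$ is the Fourier transform of a finite positive measure $\nu$ on $\widehat{(\kk,+)};$ dilation invariance forces $\nu$ to be a convex combination of the Dirac mass at the trivial character (giving the constant function $\Un$) and Haar measure (giving $\delta_0$), so $f=c\,\Un+(1-c)\delta_0$ on $U_\alpha.$ Extremality of $\chi$ then forces $c\in\{0,1\}:$ the orthogonal projection onto the trivial isotypic component of $\pi|_{U_\alpha}$ is $T(\kk)$-invariant and, by the Bruhat decomposition and the commutator relations between root subgroups in $G(\kk)^+,$ commutes with all of $\pi(G(\kk)^+),$ hence is a scalar multiple of the identity by the factor property. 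Propagating the dichotomy through all root subgroups by Weyl conjugation and commutator relations, and using the generation of $G(\kk)^+$ by root subgroups (Borel--Tits), one concludes that $\chi$ is either constant $1$ on $G(\kk)^+$ or vanishes off $Z(G(\kk)^+);$ in the latter case, $\chi|_{Z(G(\kk)^+)}$ is a single character $\omega$ by extremality, so $\chi=\widetilde\omega.$

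To assemble Theorem~\ref{Theo-CharSemisimple}, split $\mu$ as $\mu=c_1\delta_{\Un_{G(\kk)^+}}+\mu_2$ with $\mu_2$ carried by the $\widetilde\omega\in\Char(G(\kk)^+),$ giving $\psi=c_1\Un_{G(\kk)^+}+\psi_2$ with $\psi_2$ supported in $Z(G(\kk)^+)\subset Z(G)(\kk).$ This splitting extends to $\vfi=\vfi_1+\vfi_2$ on $G(\kk)$ via the central projection in $M'$ implementing the decomposition on the GNS side; the conjugation invariance of $\vfi$ under $G(\kk)^+$ ensures that this projection is compatible with $\pi(G(\kk)),$ producing $\vfi_1,\vfi_2\in P(G(\kk))$ with $\vfi_1\equiv 1$ on $G(\kk)^+$ and $\vfi_2=0$ off $Z(G)(\kk).$ Specialising to extreme points of $P(G(\kk))$ then yields the explicit formula for $\Char(G(\kk)).$ The hardest step is the dichotomy $c\in\{0,1\}$ for the root-subgroup restriction: dilation invariance plus Bochner only yields a convex combination, and one must invoke the global structure of $G(\kk)^+$ (the factor property of the GNS representation, commutator identities, and the Bruhat decomposition) to force extremality, in a way that is robust over infinite fields of arbitrary characteristic.
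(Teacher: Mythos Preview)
Your architecture---classify $\Char(G(\kk)^+)$ first, then lift to $G(\kk)$---reverses the paper's logic (it proves Theorem~\ref{Theo-CharSemisimple} directly on $G(\kk)$ and deduces the classification of $\Char(G(\kk)^+)$ as a corollary), and the sketch leaves genuine gaps at exactly the two places where the hard work lies.

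First, the dichotomy $c\in\{0,1\}$. The Bochner step is unjustified: invariance of the spectral measure under an infinite $S\subset\kk^\times$ does \emph{not} by itself force it to be a combination of $\delta_{\Un}$ and Haar, since no unique-ergodicity statement is available on $\widehat{\kk}\setminus\{\Un\}$. What you actually need is that the projection onto $\H^{U_\alpha}$ is central in $\pi(G(\kk)^+)''$, i.e.\ that $\H^{U_\alpha}$ is $\pi(G(\kk)^+)$-invariant; you assert this follows from ``commutator relations and the Bruhat decomposition,'' but $\pi(g)\H^{U_\alpha}=\H^{gU_\alpha g^{-1}}$ and there is no a priori reason these fixed-point spaces coincide across root groups. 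The paper establishes this only for roots with $U_{(\alpha)}\subset Z(U^\pm)$, and even there must invoke the $SL_2$ embedding together with the Peterson--Thom classification of $\Char(SL_2(\kk))$ and a dedicated mixing lemma (Proposition~\ref{CharacterSemiDirectProduct}) before Tits' simplicity can be applied to force $\H^{U_{(\alpha)}}=\{0\}$. Second, even granting that $\chi$ vanishes on every nontrivial root-group element, concluding $\chi(g)=0$ for all non-central $g$ is far from ``propagating by Weyl conjugation and commutator relations'': a general $g$ is a product of root-group elements and $\chi$ is not multiplicative. The paper devotes seven further steps to this, repeatedly constructing sequences $(g_n)$ in $G(\kk)^+$ with $\vfi([g^{-1},g_m]^{-1}[g^{-1},g_n])=0$ (Lemma~\ref{Lem-Hilbert}) to climb from root groups to $Z(U^\pm)$, to all of $U^\pm$ via the descending central series, to the torus, to $S(\kk)U^\pm(\kk)$, and finally through the Bruhat decomposition to arbitrary $g\notin Z_G(S)$, with a separate appeal to Tits' simplicity for $g\in Z_G(S)\setminus Z(G)$. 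Your lifting step has a residual issue as well: the decomposition of $\vfi|_{G(\kk)^+}$ only gives $\vfi_2$ vanishing off $Z(G(\kk)^+)$ as a function on $G(\kk)^+$, not vanishing off $Z(G)(\kk)$ as a function on all of $G(\kk)$; the paper avoids this entirely by working on $G(\kk)$ throughout and splitting along the manifestly $G(\kk)$-invariant subspace $\H^{G(\kk)^+}$ from the outset.
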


Our  proof of Theorem~\ref{Theo-CharSemisimple}  is not independent of Tits' simplicity theorem; its overall strategy, similar to the one for the proof
of the Howe-Moore theorem (\cite[Theorem 5.1]{Howe-Moore}),
proceeds by the analysis of the restriction of a function $\vfi$ as above to the semi-direct
product of a maximal $\kk$-split torus in $G$ with various root spaces.
In this analysis, a crucial tool is played by Proposition~\ref{CharacterSemiDirectProduct}
below, which, combined with Tits' simplicity theorem, yields a mixing result of traces on such a semi-direct product.

The following result is an immediate consequence of  
Theorem~\ref{Theo-CharSemisimple} (see Section~\ref{ExtensionReductive}).

\begin{corx}
  \label{Cor-CharSemisimple}
  Let $\kk$ and $G$ be as in  Theorem~\ref{Theo-CharSemisimple}.
  Then 
  $$
  \Char(G(\kk)^+)= \{\Un_{G(\kk)^+}\}\cup \{\widetilde{\chi}\mid \chi \in \widehat{Z(G(\kk)^+)}\}.
  $$
  In particular, we have
  $
\Char(PG(\kk)^+)= \{\Un_{PG(\kk)^+}, \delta_e\}.$
\end{corx}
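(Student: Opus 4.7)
My plan is to deduce Corollary~\ref{Cor-CharSemisimple} from Theorem~\ref{Theo-CharSemisimple} by an extend-then-restrict argument, combined with Tits' simplicity theorem (to identify $Z(G)(\kk)\cap G(\kk)^+$ with $Z(G(\kk)^+)$) and Bochner's theorem on the abelian center.

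Given $\psi\in\Char(G(\kk)^+)$, I first extend it by zero to a function $\widetilde{\psi}$ on $G(\kk)$. Since $G(\kk)^+$ is normal, grouping indices by $G(\kk)^+$-cosets block-diagonalizes the kernel matrix $(\widetilde{\psi}(g_j^{-1}g_i))$, each block being positive semidefinite by positivity of $\psi$; hence $\widetilde{\psi}\in P(G(\kk))$. Centrality of $\psi$ combined with normality of $G(\kk)^+$ makes $\widetilde{\psi}$ invariant under $G(\kk)^+$-conjugation. Applying Theorem~\ref{Theo-CharSemisimple} yields $t\in[0,1]$ and $\vfi_1,\vfi_2\in P(G(\kk))$ with $\vfi_1=1$ on $G(\kk)^+$ and $\vfi_2=0$ off $Z(G)(\kk)$ such that $\widetilde{\psi}=t\vfi_1+(1-t)\vfi_2$. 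Restricting to $G(\kk)^+$ gives $\psi=t\Un_{G(\kk)^+}+(1-t)\tau$ with $\tau:=\vfi_2|_{G(\kk)^+}\in P(G(\kk)^+)$ supported on $Z(G)(\kk)\cap G(\kk)^+$; for $t<1$ the function $\tau=(\psi-t\Un_{G(\kk)^+})/(1-t)$ is also central and so lies in $\Tr(G(\kk)^+)$.

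Next I establish $Z(G)(\kk)\cap G(\kk)^+=Z(G(\kk)^+)$: the inclusion $\subseteq$ is immediate, and the reverse follows from Tits' simplicity theorem applied to the subgroup $Z(G(\kk)^+)$ (which is normal in $G(\kk)^+$), forcing it to lie in $Z(G)(\kk)$ or to contain $G(\kk)^+$ --- the latter only in the degenerate abelian case, where the corollary is trivial. Extremality of $\psi$ in $\Tr(G(\kk)^+)$ then forces either $\psi=\Un_{G(\kk)^+}$ or $\psi=\widetilde{\chi}$ with $\chi:=\psi|_{Z(G(\kk)^+)}$. In the second case, Bochner's theorem on the abelian group $Z(G(\kk)^+)$ writes $\chi=\int_{\widehat{Z(G(\kk)^+)}}\xi\,d\mu(\xi)$; each $\widetilde{\xi}$ for $\xi\in\widehat{Z(G(\kk)^+)}$ lies in $\Tr(G(\kk)^+)$ (same trivial-extension argument, with centrality automatic because conjugation by $G(\kk)^+$ fixes $Z(G(\kk)^+)$ pointwise), so $\widetilde{\chi}=\int\widetilde{\xi}\,d\mu(\xi)$ is an integral of traces, and extremality of $\psi$ forces $\mu$ to be a Dirac mass, giving $\chi\in\widehat{Z(G(\kk)^+)}$.

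For the reverse inclusion I would verify directly that $\Un_{G(\kk)^+}$ and each $\widetilde{\chi}$ with $\chi\in\widehat{Z(G(\kk)^+)}$ are extreme: by the forward direction, any convex decomposition in $\Tr(G(\kk)^+)$ involves only summands of the same two types, and a brief inspection at points of $G(\kk)^+\setminus Z(G(\kk)^+)$ (where $\widetilde{\chi}$ vanishes while $\Un_{G(\kk)^+}$ equals one) rules out nontrivial mixes. The ``in particular'' statement is then immediate: characters of $PG(\kk)^+=G(\kk)^+/Z(G(\kk)^+)$ correspond to characters of $G(\kk)^+$ that are trivial on $Z(G(\kk)^+)$, leaving precisely $\Un_{G(\kk)^+}$ (descending to $\Un_{PG(\kk)^+}$) and $\widetilde{\chi}$ with $\chi=\Un_{Z(G(\kk)^+)}$ (descending to $\delta_e$). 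The only non-routine input beyond Theorem~\ref{Theo-CharSemisimple} is Tits' simplicity, used to identify the two centers; everything else is formal manipulation of positive-type functions, and there is no real obstacle.
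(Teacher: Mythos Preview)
Your proof is correct and follows essentially the same route as the paper: extend by zero to $G(\kk)$, apply the first statement of Theorem~\ref{Theo-CharSemisimple}, and use indecomposability to force $\psi=\Un_{G(\kk)^+}$ or $\psi=\widetilde{\chi}$. The only minor difference is that the paper identifies $Z(G(\kk)^+)$ with $Z(G)(\kk)\cap G(\kk)^+$ via the Zariski density of $G(\kk)^+$ in $G$ (Remark~\ref{Rem-Cor-CharSemisimple2}(iv)) rather than via Tits' simplicity theorem, and it leaves the Bochner step and the reverse inclusion implicit (the latter being done for $G(\kk)$ in the proof of the second statement of Theorem~\ref{Theo-CharSemisimple}); your write-up simply makes these details explicit.
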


 Corollary~\ref{Cor-CharSemisimple} can easily be extended to  the  case of semi-simple  groups (see Section~\ref{ExtensionReductive}).

\begin{remark}
\label{Rem-Theo-CharSemisimple}
 The result in Corollary~\ref{Cor-CharSemisimple} was obtained in the case 
 $G=SL_n$  for $n\geq 3$ in \cite{Kirillov}  
 and extended  in \cite{Ovcinnikov} to  any Chevalley group $G$  (in this case, $G$ is $\kk$-split),  not of type $A_1$ or $B_2$ (see however the footnote in {\it{op. cit.}}).
 The case $G=SL_2$ was treated in  \cite{Peterson-Thom}.
\end{remark}

As a consequence of Corollary~\ref{Cor-CharSemisimple}, we obtain the 
following rigidity result  for  factor representations of $PG(\kk)^+$ 
and for its IRS's.
\begin{corx}
  \label{Cor-CharSemisimple2}
Let $\kk$ and $G$ be as in  Theorem~\ref{Theo-CharSemisimple}.
 \begin{itemize}
 \item[(i)] Let $M$ be a von Neumann
factor which possesses a finite trace and 
$\pi: PG(\kk)^+\to U(M)$ a homomorphism in the unitary group of $M$
such that $\pi(PG(\kk)^+)''= M.$ Then either $\pi$ is the identity homomorphism
or $\pi$ extends to an isomorphism $L(G(\kk))\to M,$
where $L(PG(\kk)^+)$ is the von Neumann algebra $\la(PG(\kk)^+)''$ generated by the regular representation $\la$ 
of $PG(\kk)^+$ on $\ell^2(PG(\kk)^+).$
\item[(ii)] Assume that $\kk$ is countable. The only ergodic IRS's of $PG(\kk)^+$ are 
$\delta_{e}$ and $\delta_{PG(\kk)^+}.$
\end{itemize}
\end{corx}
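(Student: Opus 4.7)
My plan is to deduce both assertions from Corollary~\ref{Cor-CharSemisimple} through the standard dictionary relating traces on a discrete group, factor representations and invariant random subgroups. Set $\Ga := PG(\kk)^+$; since Tits' simplicity makes $\Ga$ an infinite simple group with trivial center, it is ICC (any non-trivial finite conjugacy class would have centralizer of finite index, hence equal to $\Ga$ by simplicity, forcing the element into the center), so $L(\Ga)$ is a II$_1$ factor.

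For~(i), I would set $\vfi := \tau\circ\pi$ and check at once that $\vfi \in \Tr(\Ga)$: centrality follows from the tracial property of $\tau$, and $\vfi \in P(\Ga)$ because $\tau$ is a normalized positive functional. The hypothesis that $M = \pi(\Ga)''$ is a factor is classically equivalent to $\vfi$ being an extreme point of $\Tr(\Ga)$, i.e., $\vfi \in \Char(\Ga)$, via the correspondence recalled in the introduction (\cite[Chap.~17, \S 3]{DixmierC*}). Corollary~\ref{Cor-CharSemisimple} then forces $\vfi \in \{\Un_\Ga, \delta_e\}$. If $\vfi = \Un_\Ga$, then for every $g$ one has $\tau\bigl((\pi(g)-1)^*(\pi(g)-1)\bigr) = 2 - 2\Re\tau(\pi(g)) = 0$; faithfulness of $\tau$ on the factor $M$ forces $\pi(g) = 1$, so $M = \CC$ and $\pi$ is the trivial homomorphism. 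If $\vfi = \delta_e$, then $\vfi$ is exactly the canonical trace of $L(\Ga)$, so the GNS triples $(\ell^2(\Ga), \la, \delta_e)$ and $(L^2(M,\tau), \pi, \widehat{1})$ are unitarily equivalent; the intertwining unitary produces the required trace-preserving $*$-isomorphism $L(\Ga) \to M$ extending $\pi$.

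For~(ii), let $\mu$ be an ergodic IRS of $\Ga$ (countable, since $\kk$ is) and consider the trace $\vfi_\mu(g) = \mu(\{H \in \Sub(\Ga) : g \in H\})$ introduced in the introduction. The key step, which I expect to be the main obstacle, is to upgrade ergodicity of $\mu$ into the statement $\vfi_\mu \in \Char(\Ga)$. My approach is to realize $\vfi_\mu$ as a diagonal matrix coefficient of the direct-integral quasi-regular representation $\pi_\mu := \int^{\oplus} \la_{\Ga/H}\, d\mu(H)$ on $\int^{\oplus} \ell^2(\Ga/H)\, d\mu(H)$ at the canonical cyclic vector $\xi_\mu := \int^{\oplus} \delta_H\, d\mu(H)$, with the immediate identity $\langle \pi_\mu(g)\xi_\mu, \xi_\mu\rangle = \vfi_\mu(g)$; the standard translation of ergodicity of the conjugation action of $\Ga$ on $(\Sub(\Ga), \mu)$ then yields factoriality of $\pi_\mu(\Ga)''$ with $\xi_\mu$ as a tracial cyclic vector, hence $\vfi_\mu$ is a character.

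Once $\vfi_\mu \in \Char(\Ga)$, Corollary~\ref{Cor-CharSemisimple} gives $\vfi_\mu \in \{\Un_\Ga, \delta_e\}$, and only the recovery of $\mu$ from its trace remains. Enumerating $\Ga \setminus \{e\} = \{g_n : n \geq 1\}$: if $\vfi_\mu = \Un_\Ga$, then $\mu(\{H : g_n \in H\}) = 1$ for every $n$, so countable additivity gives $\mu(\{\Ga\}) = \mu\bigl(\bigcap_n \{H : g_n \in H\}\bigr) = 1$ and $\mu = \delta_\Ga$; symmetrically, $\vfi_\mu = \delta_e$ forces $\mu(\{\{e\}\}) = 1$ and $\mu = \delta_{\{e\}}$.
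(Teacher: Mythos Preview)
Your argument for (i) is correct and matches the paper's approach: the paper notes that $PG(\kk)^+$ is ICC (hence $L(PG(\kk)^+)$ is a II$_1$ factor) and then cites \cite[Theorem~3.5]{Peterson-Thom}, whose proof is exactly the trace/GNS argument you spell out.

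For (ii) there is a genuine gap at the step you yourself flag as the main obstacle, namely the assertion that ergodicity of $\mu$ forces $\vfi_\mu \in \Char(\Ga)$ via factoriality of $\pi_\mu(\Ga)''$. In your fiberwise direct integral $\pi_\mu=\int^\oplus \la_{\Ga/H}\,d\mu(H)$ each operator $\pi_\mu(g)$ preserves the fibers, so the conjugation action of $\Ga$ on $(\Sub(\Ga),\mu)$ is simply not visible in this representation; there is no ``standard translation'' of ergodicity into factoriality here. More to the point, the implication ``$\mu$ ergodic $\Rightarrow \vfi_\mu$ extremal'' is false even for ICC groups: for $\Ga=F_2$ and $N=[\Ga,\Ga]$, the Dirac IRS $\mu=\delta_N$ is trivially ergodic, yet $\vfi_\mu=\Un_N$ has GNS representation $\la_{\Ga/N}$ with $\la_{\Ga/N}(\Ga)''\cong L(\ZZ^2)$, which is not a factor. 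Without extremality you only obtain $\vfi_\mu = t\,\Un_\Ga + (1-t)\,\delta_e$ for some $t\in[0,1]$, and nothing in your argument rules out $0<t<1$. The paper closes exactly this gap by invoking \cite[Theorem~3.2]{Peterson-Thom} (alternatively \cite[Theorem~2.12]{Dudko-Medynets}), whose proof realizes $\mu$ as the stabilizer distribution of an ergodic measure preserving action with infinite orbits and exploits the II$_1$ factor of the orbit equivalence relation together with the ICC hypothesis; that is where the real work lies. Your final recovery of $\mu$ from $\vfi_\mu\in\{\Un_\Ga,\delta_e\}$ is, by contrast, entirely correct.
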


\begin{remark}
\label{Rem-Cor-CharSemisimple2}
\begin{itemize}
\item[(i)] Item (i) of Corollary~\ref{Cor-CharSemisimple2} generalizes a classical result
of Segal and von Neumann \cite{Segal-VonNeumann} which deals
with the case $\kk=\RRR$, under a measurability condition on $\pi$
(see also  Item (iii) below and Proposition~\ref{Theo-Extension}).

\item[(ii)]   Assume  that $G$ is \textbf{$\kk$-isotropic}, that is,  $G(\kk)^+\neq\{e\}$
 (of course, the previous results have  content only in this case).
 Then $G(\kk)^+$ is  ``large", in the sense that 
 $G(\kk)^+$ is Zariski dense in $G$ (see \cite[3.2(20)]{Tits}).
In fact,  the so-called Whitehead group 
$W(\kk, G):= G(\kk)/G(\kk)^+$
 is known to be trivial  for various
groups  $G$ and various fields $\kk$ (for this, see \cite{Tits2}, \cite{Platonov-Rapinchuk},
\cite{Gille}). Although $W(\kk, G)$  can be non trivial (even when $G$ is simply connected), it seems that no example  is known where $W(\kk, G)$ is not abelian.

\item[(iii)] Let  $\kk$ be either a
local field (that is, a non discrete locally compact 
field) or a global field (that is, either  a number field or a function field in one variable over a finite field).  It is known  that $W(\kk, G)$ is trivial in both cases, when
$G$ is  simply connected, $\kk$-isotropic and almost $\kk$-simple (see \cite[Chap. 7]{Platonov-Rapinchuk} and \cite[Theorem 8.1]{Gille}).
It follows from this and from  \cite[Corollaire 6.5]{Borel-Tits2} that $W(\kk, G)$ is an abelian group,  when $G$ is any $\kk$-isotropic and almost $\kk$-simple group; since $G(\kk)^+$ is perfect (\cite[3.3]{Tits}), this means that 
$W(\kk, G)$ coincides with the abelianization $G(\kk)_{\rm ab}$ of $G(\kk).$
More is known about the structure of $W(\kk, G)$, when $\kk$ is a local field (see \cite[6.14, 6.15]{Borel-Tits2}); for instance, $G(\RRR)^+$ coincides with the 
identity component of the Lie group $G(\RRR).$

\item[(iv)] Observe that the Zariski-density of $G(\kk)^+$  in $G$ implies that 
the center of $G(\kk)^+$ coincides with $Z(G)\cap G(\kk)^+.$
\end{itemize}
\end{remark}
When $\kk$ is a local field, $G(\kk)$ has a natural locally compact 
topology induced by $\kk;$ however,  $\Char(G(\kk))$  in the statement of the following corollary refers to the characters of $G(\kk)$ as {\it{discrete}}\,   group.

\begin{corx}
  \label{Cor-Theo-CharSemisimple3}
  Let  $\kk$ be a local or global field and $G$ a $\kk$-isotropic and almost $\kk$-simple algebraic group. 
  
  \begin{itemize}
 \item[(i)] We have $\Char(G(\kk))= \widehat{G(\kk)_{\rm ab}} \cup \{\widetilde{\chi}\mid \chi \in \widehat{Z(G)(\kk)}\}.$
\item[(ii)]  Assume that $\kk$ is a global field and that $Z(G)=\{e\}.$ Every  measure preserving  ergodic action  of $G(\kk)$ on a probability space either factorizes through  $G(\kk)_{\rm ab}$  or is essentially free.
 \end{itemize}
\end{corx}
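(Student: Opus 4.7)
For part (i), the plan is to read it off Theorem~\ref{Theo-CharSemisimple} combined with Remark~\ref{Rem-Cor-CharSemisimple2}(iii). The latter states that under our hypotheses on $\kk$ and $G$, the quotient $G(\kk)/G(\kk)^+$ coincides with the abelianization $G(\kk)_{\rm ab}$. Since characters of an abelian group are exactly its Pontryagin-dual unitary characters, $\Char(G(\kk)/G(\kk)^+) = \widehat{G(\kk)_{\rm ab}}$, and (i) follows from Theorem~\ref{Theo-CharSemisimple}.

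For part (ii), let $G(\kk) \curvearrowright (X, \mu)$ be ergodic and measure-preserving, and define the trace $\vfi(g) := \mu(\{x \in X : gx = x\}) = \int_X \Un_{G_x}(g) d\mu(x)$ on $G(\kk)$; this is central and of positive type as an average of the positive-definite functions $\Un_{G_x}$. Because $\kk$ is a global field, $G(\kk)$ is countable, so IRS and ergodic-decomposition methods apply. The key dichotomy is on the $G(\kk)$-invariant measurable set $X^{G(\kk)^+} := \{x : G(\kk)^+ \subseteq G_x\}$, whose invariance uses the normality of $G(\kk)^+$; by ergodicity, $\mu(X^{G(\kk)^+}) \in \{0, 1\}$. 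When this measure equals $1$, $G(\kk)^+$ acts trivially on a conull set and the action factorizes through $G(\kk)/G(\kk)^+ = G(\kk)_{\rm ab}$.

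In the remaining case $\mu(X^{G(\kk)^+}) = 0$, I would pass to the ergodic decomposition $\mu = \int_Y \mu_y \, dP(y)$ for the action of the countable group $G(\kk)^+$. For each $y$, the pushforward of $\mu_y$ under the equivariant stabilizer map $x \mapsto G_x \cap G(\kk)^+$ is a $G(\kk)^+$-ergodic IRS of $G(\kk)^+$. The assumption $Z(G) = \{e\}$ together with Remark~\ref{Rem-Cor-CharSemisimple2}(iv) yields $Z(G(\kk)^+) = \{e\}$, so $G(\kk)^+ = PG(\kk)^+$, and Corollary~\ref{Cor-CharSemisimple2}(ii) forces this IRS to be either $\delta_{e}$ or $\delta_{G(\kk)^+}$. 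The second possibility is excluded for $P$-a.e.\ $y$ by the hypothesis $\mu(X^{G(\kk)^+}) = 0$. Hence for $\mu$-a.e.\ $x$ we have $G_x \cap G(\kk)^+ = \{e\}$, which gives $\vfi|_{G(\kk)^+} = \delta_e$.

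To upgrade this to $\vfi = \delta_e$ on all of $G(\kk)$, I would invoke (i) and decompose the trace as a barycenter of characters, $\vfi = \int_{\widehat{G(\kk)_{\rm ab}}} \chi \, d\nu(\chi) + a \, \delta_e$, with $\nu$ a positive measure on $\widehat{G(\kk)_{\rm ab}}$, $a \geq 0$, and $\nu(\widehat{G(\kk)_{\rm ab}}) + a = 1$. Every $\chi \in \widehat{G(\kk)_{\rm ab}}$ equals $1$ on $G(\kk)^+$, so evaluating at any $g \in G(\kk)^+ \setminus \{e\}$ yields $\nu(\widehat{G(\kk)_{\rm ab}}) = \vfi(g) = 0$; hence $\nu = 0$, $a = 1$, and $\vfi = \delta_e$, which is essential freeness. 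The main obstacle I anticipate is the dichotomy step: bridging $G(\kk)$-ergodicity of the action on $X$ and $G(\kk)^+$-ergodicity of the IRS on each fiber requires the ergodic decomposition under the normal subgroup $G(\kk)^+$, and here the countability of $G(\kk)$ (secured by $\kk$ being a global field) is essential.
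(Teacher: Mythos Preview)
Your argument is correct. Part (i) is handled exactly as in the paper: Theorem~\ref{Theo-CharSemisimple} together with Remark~\ref{Rem-Cor-CharSemisimple2}(iii) identifies $\Char(G(\kk)/G(\kk)^+)$ with $\widehat{G(\kk)_{\rm ab}}$.

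For part (ii) your route differs from the paper's, though both start from the same dichotomy on the $G(\kk)$-invariant set of $G(\kk)^+$-fixed points. In the non-trivial case, the paper works directly at the level of $G(\kk)$: it observes that $G(\kk)$ is a countable ICC group (a consequence of $Z(G)=\{e\}$ and Tits' theorem), notes that $\mu$-almost every $G(\kk)$-orbit is infinite, and then invokes the proof of \cite[Theorem 3.2]{Peterson-Thom} with the character classification from (i) as input to conclude essential freeness in one stroke. You instead descend to $G(\kk)^+$: you take the ergodic decomposition of $\mu$ under $G(\kk)^+$, push each ergodic component to an ergodic IRS of $G(\kk)^+=PG(\kk)^+$, apply Corollary~\ref{Cor-CharSemisimple2}(ii) to force $G_x\cap G(\kk)^+=\{e\}$ almost everywhere, and only then return to $G(\kk)$ via the Choquet decomposition of the associated trace over $\Char(G(\kk))$. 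This is a legitimate and fully unpacked argument; the upgrade step (evaluating the barycentric decomposition at a nontrivial element of $G(\kk)^+$ to kill the $\widehat{G(\kk)_{\rm ab}}$-part) is correct. The paper's approach is shorter because it applies the Peterson--Thom machinery once, to the ICC group $G(\kk)$ itself, rather than to the normal subgroup followed by a separate extension argument; your approach has the virtue of making the role of Corollary~\ref{Cor-CharSemisimple2}(ii) explicit. One small point worth stating: your ergodic-decomposition step implicitly assumes $(X,\mu)$ is a standard probability space, which is the customary (and here harmless) standing hypothesis.
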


This paper is organized as follows. In Section~\ref{SS:PreliminaryResult}, we prove 
the  mixing result (Proposition~\ref{CharacterSemiDirectProduct}) which is crucial
for our approach.
 Some general facts about relative root spaces of
simple algebraic groups  are recalled in Section~\ref{S-FactsAlgGr}.
The proofs of Theorem~\ref{Theo-CharSemisimple} and its corollaries  are given 
in Sections~\ref{S:Proof} and ~\ref{ExtensionReductive}.

\noindent
{\bf Acknowledgement.}\ We are grateful to Jesse Peterson
for a useful suggestion concerning the proof of Corollary~\ref{Cor-Theo-CharSemisimple3}.ii.

\section{A preliminary result on traces}
\label{SS:PreliminaryResult} 
 Let $G$ be a group. Recall that a unitary representation $(\pi, \H)$ of $G$
 is a homomorphism $\pi$ from $G$ to the group of unitary operators
 on a Hilbert space $\H.$
 The von Neumann algebra $\pi(G)''$ generated by 
 $\pi(G)$ is the bicommutant of the subset $\pi(G)$ of the algebra $\B(\H)$ 
 of bounded linear operators on $\H$ and can also be described as the closure
 of the linear span of $\pi(G)$  in the strong (or the weak) operator topology. 
 
 Let $\vfi\in P(G)$. Recall (see \cite[Appendix C]{BHV})
that there exists a so-called  \textbf{GNS-triple}  $(\pi, \H, \xi)$ associated to $\vfi$ 
which consists of a unitary 
representation $\pi$ of $G$ on a Hilbert space $\H$ and 
a unit vector $\xi\in \H$ which is $G$-cyclic (that is, the linear span
of $\{\pi(g)\xi\mid g\in G\}$ is dense in $\H$) and such that
$\vfi(g)= \langle \pi(g)\xi  \mid \xi \rangle$ for every $g\in G.$
 
 The proof of the following lemma is straightforward.
 \begin{lemma}
 \label{Lem-InvariantTrace}
  Let $G^+$ be  a subgroup of $G$ and assume that  $\vfi$ is $G^+$-invariant, that is, 
 $\vfi(g xg^{-1})= \vfi(x)$ for all $x\in G$ and  $g\in G^+.$
 Let  $(\pi, \H, \xi)$ be a   GNS-triple  associated to $\vfi$
 and let $\tau_\vfi$ be the linear functional on $\pi(G)''$ defined by 
 $$
 \tau_\vfi(T)=  \langle T\xi  \mid \xi \rangle \tout T\in \pi(G)''.
 $$
 We have $\tau_\vfi(TS)= \tau_\vfi(ST)$ for all $T\in \pi(G)''$ and $S\in \pi(G^+)''.$
 \end{lemma}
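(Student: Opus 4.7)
The plan is to first verify the trace-like identity on group elements, then extend it by linearity and density.

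First, I would show that for every $g\in G$ and $h\in G^+$ one has $\varphi(gh)=\varphi(hg)$. Using the $G^+$-invariance of $\varphi$ with $x=gh$ and conjugator $h\in G^+$, one has
$$\varphi(gh)=\varphi(h(gh)h^{-1})=\varphi(hg).$$
Since $\tau_\varphi(\pi(x))=\langle\pi(x)\xi\mid\xi\rangle=\varphi(x)$, this rewrites as $\tau_\varphi(\pi(g)\pi(h))=\tau_\varphi(\pi(h)\pi(g))$ for all $g\in G$, $h\in G^+$.

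Next, by bilinearity in $(T,S)\mapsto\tau_\varphi(TS)-\tau_\varphi(ST)$, the identity extends to $\tau_\varphi(T_0S_0)=\tau_\varphi(S_0T_0)$ whenever $T_0$ lies in the linear span $A$ of $\pi(G)$ and $S_0$ lies in the linear span $B$ of $\pi(G^+)$.

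Finally I would pass to the weak (or strong) operator closures. The functional $T\mapsto\tau_\varphi(T)=\langle T\xi\mid\xi\rangle$ is WOT-continuous, so for fixed $S_0\in B$ the maps $T\mapsto\tau_\varphi(TS_0)$ and $T\mapsto\tau_\varphi(S_0T)$ agree on $A$ and are both WOT-continuous on $\B(\H)$; since $A$ is WOT-dense in $\pi(G)''$, they agree on all of $\pi(G)''$. Symmetrically, fixing $T\in\pi(G)''$, the maps $S\mapsto\tau_\varphi(TS)$ and $S\mapsto\tau_\varphi(ST)$ agree on $B$. To conclude on all of $\pi(G^+)''$ one invokes the Kaplansky density theorem: any $S\in\pi(G^+)''$ is the WOT-limit of a net $(S_\alpha)$ in $B$ with $\|S_\alpha\|\leq\|S\|$, and on such bounded nets both maps $S\mapsto\tau_\varphi(TS)$ and $S\mapsto\tau_\varphi(ST)$ are WOT-continuous (since left and right multiplication by a fixed bounded operator is WOT-continuous).

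The main obstacle, if any, is the last continuity step: one must be careful that separate WOT-continuity (not joint) is what is available, but Kaplansky density produces norm-bounded approximating nets for which separate continuity suffices.
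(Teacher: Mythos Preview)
Your proof is correct and is exactly the standard argument; the paper itself omits the proof entirely, declaring it ``straightforward.'' One small simplification: the appeal to Kaplansky density is unnecessary, since for fixed $T\in\pi(G)''$ the maps $S\mapsto\langle TS\xi\mid\xi\rangle=\langle S\xi\mid T^*\xi\rangle$ and $S\mapsto\langle ST\xi\mid\xi\rangle=\langle S(T\xi)\mid\xi\rangle$ are already vector functionals and hence WOT-continuous on all of $\B(\H)$, not merely on bounded sets.
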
 

 If $\theta$ is an automorphism of an abelian locally compact group $U$,
 we denote by $\chi\mapsto \chi^\theta:=\chi\circ \theta^{-1}$ the dual action of $\theta$
 on the dual group $\widehat U$.
  
Let $(\pi, \H)$ be a unitary representation of a locally compact group $G$.
For a subset $X$ of $G,$ we denote by $\H^X$  the subspace of $X$-invariant vectors in $\H.$
Moreover, we will say that $\pi$
is mixing if the matrix coefficient $ \langle \pi(\cdot)\xi  \mid \eta \rangle$
vanishes at infinity on $G$  (that is, belongs to $C_0(G)$) for every $\xi,\eta\in \H$.

The following proposition will be a  crucial tool in our proof of Theorem~\ref{Theo-CharSemisimple}.
  \begin{proposition}
\label{CharacterSemiDirectProduct}
Let $G$ be a discrete group. Let $G^+$ be a normal subgroup of $G$ containing
  a subgroup $H$ and  an  abelian subgroup $U$ normalized by $H$
  with the following property: there exists a finite subset $F$ of $H$ such that, 
 for every  $h\in H\setminus F$ and every 
 $\chi\in \widehat U \setminus\{\Un_U\},$ we have $\chi^h\neq \chi.$
 
Let $\vfi: G\to \CC$ be a  $G^+$-invariant  normalized function of positive type on $G$
and let $(\pi, \H, \xi)$ be a  GNS-triple associated to $\vfi.$
\begin{itemize}
\item[(i)] The restriction of $\pi\vert_ H$ to  the orthogonal complement of 
 $\H^U$ is a mixing  representation of $H.$
\item[(ii)] If $H$ is infinite, then   $\H^H$ is contained in $\H^U.$
\item[(iii)]  If $\H^U=\{0\},$ then  $\varphi(hu)=0$ for every $h\in H\setminus F$ and $u\in U.$
\end{itemize}
\end{proposition}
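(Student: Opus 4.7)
My plan centers on the spectral decomposition of $\pi|_U$. By the spectral theorem for the abelian group $U$, I write $\pi(u) = \int_{\widehat U} \chi(u)\,dE(\chi)$ for a projection-valued measure $E$ on $\widehat U$; set $P := E(\{\Un_U\})$, so $P\H = \H^U$. Because $H$ normalizes $U$, one has $\pi(h) E(B)\pi(h)^{-1} = E(h \cdot B)$ for every Borel $B \subset \widehat U$ and every $h \in H$, and in particular $P$ commutes with each $\pi(h)$. The scalar spectral measure $\mu_\eta(B) = \|E(B)\eta\|^2$ transforms by $\mu_{\pi(h)\eta}(B) = \mu_\eta(h^{-1}\cdot B)$; in particular, for $\eta \in \H^H$ it is $H$-invariant on $\widehat U$.

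For (ii), the hypothesis forces the $H$-stabilizer of any $\chi \in \widehat U \setminus \{\Un_U\}$ to be contained in $F$, hence finite; when $H$ is infinite this makes every such orbit infinite. Since $\mu_\eta$ is a finite $H$-invariant positive measure, it cannot charge any infinite orbit, so it is concentrated at $\{\Un_U\}$ and thus $\eta \in \H^U$.

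For (iii), my approach is Fourier-theoretic. Introduce the complex Borel measure $\nu_h(B) := \langle \pi(h) E(B)\xi, \xi\rangle$ on $\widehat U$, so that $\varphi(hu) = \int \chi(u)\,d\nu_h(\chi)$. The key input is the $G^+$-invariance of $\varphi$ applied to conjugation by elements $u' \in U \subset G^+$: a direct calculation, using that $H$ normalizes the abelian $U$, rewrites $u'(hu){u'}^{-1} = h(u + w)$, where $w = (\alpha_{h^{-1}} - I)(u')$ and $\alpha_h$ denotes the $h$-conjugation action on $U$. As $u'$ varies, $w$ ranges over the subgroup $V := (I - \alpha_{h^{-1}})(U) \subset U$, so the function $u \mapsto \varphi(hu)$ is translation-invariant by $V$. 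Passing to the Fourier side, $\nu_h$ must be supported on the annihilator $V^\perp$, which a short calculation identifies with $\{\chi \in \widehat U : \chi^h = \chi\}$. For $h \notin F$ the hypothesis collapses this set to $\{\Un_U\}$, and so $\nu_h = \langle \pi(h) P\xi, \xi\rangle\,\delta_{\Un_U}$; the assumption $\H^U = \{0\}$ gives $P = 0$, hence $\nu_h = 0$ and $\varphi(hu) = 0$.

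For (i), I would invoke Mackey's description of unitary representations of semi-direct products: the restriction of $\pi$ to $U \rtimes H$ on $(\H^U)^\perp$ disintegrates over the $H$-orbits in $\widehat U \setminus \{\Un_U\}$ as a direct integral of induced representations $\mathrm{Ind}_{UH_\chi}^{U \rtimes H}(\chi \otimes \sigma)$; each stabilizer $H_\chi \subset F$ is finite, so restricted to $H$ each constituent is a subrepresentation of the quasi-regular representation on $\ell^2(H/H_\chi)$ tensored with a finite-dimensional piece, whose matrix coefficients lie in $c_0(H)$. Assembling yields the mixing statement. I expect (iii) to be the main obstacle: identifying the annihilator $V^\perp$ with the set of $h$-fixed characters and deploying the $G^+$-invariance only via conjugation by $U$ is the crux of the argument. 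Part (i), while conceptually clear through Mackey's machinery, will still require some measurability and disintegration care.
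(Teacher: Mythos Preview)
Your argument for (iii) is correct and is genuinely different from the paper's. The paper never isolates conjugation by $U$; instead it uses the full trace-like identity $\tau_\varphi(TS)=\tau_\varphi(ST)$ for $S\in\pi(G^+)''$ (Lemma~\ref{Lem-InvariantTrace}), applied with $S$ a spectral projection $E(B)\in\pi(U)''\subset\pi(G^+)''$, together with a Borel partition of $\widehat U\setminus\{\Un_U\}$ into pieces $B_i$ with $B_i\cap B_i^h=\emptyset$ (Lemma~\ref{Lem-FreeAction}), to obtain directly
\[
\langle \pi(hu)\,E(X)\pi(g)\xi,\ E(X)\pi(g)\xi\rangle=0\qquad(g\in G,\ h\in H\setminus F,\ u\in U),
\]
from which (i), (ii), (iii) are read off at once. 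Your Fourier-theoretic route via the annihilator $V^\perp=\{\chi:\chi^h=\chi\}$ is more economical for (iii) alone, and it is worth noting that it uses only invariance of $\varphi$ under conjugation by $U$, not by all of $G^+$.

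Your arguments for (i) and (ii), however, share a genuine gap: neither uses the $G^+$-invariance of $\varphi$, and that hypothesis cannot be removed. Indeed, let $\mu$ be any $H$-invariant probability measure on $\widehat U\setminus\{\Un_U\}$ (nothing in the hypothesis forbids this; for amenable $H$ such $\mu$ typically exist), and let $U$ act on $L^2(\widehat U,\mu)$ by $(\rho(u)f)(\chi)=\chi(u)f(\chi)$ and $H$ by translation. Then $\H^U=\{0\}$, yet the constant function $1$ is $H$-fixed; so both (i) and (ii) fail for this representation. Concretely, in (ii) the step ``$\mu_\eta$ is a finite $H$-invariant measure, so it cannot charge any infinite orbit'' is false: Lebesgue measure under an irrational rotation is the standard counterexample. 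In (i), Mackey's disintegration into representations induced from point-stabilizers is valid only when the $H$-orbit space on $\widehat U$ is countably separated; in general there are ergodic $H$-invariant (or quasi-invariant) measures not carried by any single orbit, and the associated pieces of the representation---exactly the $L^2(\mu)$ example above---are not of the form $\mathrm{Ind}_{UH_\chi}^{U\rtimes H}(\chi\otimes\sigma)$.

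The paper closes this gap precisely through the trace property: since the spectral projections $E(B)$ lie in $\pi(G^+)''$, one can commute them past $\pi(hu)$ inside $\tau_\varphi$, and this is what forces the vanishing of the diagonal coefficient of $\pi(h)$ on $(\H^U)^\perp$ for the cyclic vector $\xi$ and all its $G$-translates; mixing on the whole of $(\H^U)^\perp$ then follows by a Zorn argument. You will need some mechanism of this sort---one that genuinely consumes the $G^+$-invariance---to repair (i) and (ii).
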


\begin{proof} 
 Consider the projection-valued measure 
$E:{\mathcal B}(\widehat U)\to \Proj(\H)$ associated to the restriction $\pi \vert_U$
of $\pi$ to $U,$
where ${\mathcal B}(\widehat U)$ is 
the  $\sigma$-algebra of Borel subsets of the compact space $\widehat U$
and  $\Proj(\H)$ the set  of orthogonal projections on $\H$.
The commutant of $\pi(U)$ coincides with 
the commutant of $\{E(B)\mid B\in {\mathcal B}(\widehat U)\}.$
In particular, the range of $E(B)$ is $\pi(U)$-invariant 
and  $E(B) \in \pi(U)''$ for every $B\in{\mathcal B}(\widehat U).$
Moreover, the subspace $\H^U$ coincides 
with the range of the projection $E(\{\Un_U\})$ and we have
the covariance relation
$$
\pi(h) E(B) \pi(h^{-1})=E(B^{h}) \tout B \in {\mathcal B}(\widehat U), h\in H,
\leqno{(*)}
$$
where $B^h = \{\chi^h \mid \chi \in B\}$; 
for all this, see Appendix D as well as the proof of Theorem 1.4.5 in \cite{BHV}.

Set 
$$
X:=\widehat{U}\setminus \{\Un_U\}
$$
and fix $h\in H\smallsetminus F.$ Since 
$\chi^{h}\neq \chi$ for every $\chi\in X,$  there exists 
a family $(B_i)_{i \in I}$ of pairwise disjoint Borel subspaces  of $X$
with  $X=\bigcup_{i\in I} B_i$  such that
such that 
$$B_i \cap B_i^{h}=\emptyset \tout i\in I,$$
by Lemma~\ref{Lem-FreeAction} below.
By the general properties of a projection-valued measure, we have
\begin{itemize}
\item [(1)] $E(B_i)E(B_j)= 0$ for all $i \ne j,$
\item [(2)] $\sum_{i\in I} E(B_i) = E(X),$ in the strong operator topology, and
\item [(3)]  $E(B_i)E(B_i^{h}) = 0$ for all  $i\in I.$
\end{itemize}

We claim that, for every $g\in G,$ we  have
$$
\langle \pi(hu)E(X)\pi(g)\xi  \mid E(X)\pi(g)\xi \rangle=0 \tout u\in U. \leqno{(**)}
$$
Indeed, let us first prove the formula for $g=e$. Let $u\in U.$
By Lemma~\ref{Lem-InvariantTrace} and  the relation (1) above, we have
$$
\begin{aligned}
\langle \pi(hu)E(B_i)\xi \mid E(B_j)\xi \rangle &=
\tau_\vfi(E(B_j) \pi(hu) E(B_i))\\
&=\tau_\vfi (E(B_i)E(B_j) \pi(hu)) = 0,
\end{aligned}
$$
for all $i\neq j$.
By  successively relations (2), $(*)$, and (3), it follows that
$$
\begin{aligned}
\langle \pi(hu)E(X)\xi  \mid E(X)\xi \rangle&
=\langle \pi(hu)E(X)\xi  \mid E(X)\xi \rangle\\
 &= \langle \pi(hu) (\sum_{i\in I} E(B_i) \xi) \mid\sum_{i\in I} E(B_i)\xi \rangle\\
&=\sum_{i,j\in I} \langle \pi(hu) E(B_i) \xi \mid E(B_j) \xi \rangle\\
&= \sum_{i\in I} \langle \pi(hu) E(B_i) \xi \mid E(B_i) \xi \rangle\\
& = \sum_{i\in I} \langle E(B_i^{h})\pi(hu) \xi \mid E(B_i) \xi \rangle\\
&=  \sum_{i\in I} \langle E(B_i) E(B_i^{h})\pi(hu) \xi \mid \xi \rangle\\
&=0.
\end{aligned}
$$
Assume now that $g\in G$ is arbitrary and 
consider  the normalized function of positive type $\vfi^g$  on $G$
defined by $\vfi^g(x)= \vfi (g^{-1} xg)$ for $x\in G.$ 
It is clear that  $(\pi, \H, \pi(g)\xi)$ is a  GNS-triple associated to $\vfi.$
Moreover, as  $G^+$ is normal in $G,$ it is readily checked that $\vfi^g$ is 
$G^+$-invariant.  The linear functional on $\pi(G)''$   associated
to $\vfi^g$ as in  Lemma~\ref{Lem-InvariantTrace} is given by 
$\tau_{\vfi^g}(T)=  \langle T\pi(g)\xi  \mid \pi(g)\xi \rangle$
and satisfies $\tau_{\vfi^g}(ST)=\tau_{\vfi^g}(TS)$
for $T\in \pi(G)''$ and $S\in \pi(G^+)''.$
Replacing $\vfi$ by $\vfi^g$ in the previous 
argument,  we see therefore that 
$$\langle \pi(hu)E(X)\pi(g)\xi  \mid E(X)\pi(g)\xi \rangle=0,$$
for all $u\in U$; so, $(**)$ holds for every $g\in G.$

The relation $(**)$  shows that $\pi\vert_H$,  restricted to the closed linear span $V_g$ of 
$$\{\pi(h)E(X)\pi(g)\xi\mid h\in H\},$$ 
is a mixing representation of $H$, for every $g\in G .$
Observe that  $\K:=E(X)(\H)$ is the orthogonal complement of $\H^U.$
Since $\xi$ is a cyclic vector for the $G$-representation $\pi$, the linear span of 
$\{E(X)\pi(g)\xi\mid g\in G\}$ and hence the linear span of $\bigcup_{g\in G} V_g$ is dense in $\K.$ 
 
Let   $\mathcal{F}$  be the set
of all families  $(\H_i)_{i\in I}$ of pairwise orthogonal 
$\pi(H)$-invariant closed subspaces $\H_i$ of $\K$ such that 
$\pi\vert_ H$, restricted to $\oplus_{i\in I} \H_i$
is mixing. We partially order $\mathcal{F}$ by inclusion.
Observe that $\mathcal{F}$ is non empty
(since  $\{0\}\in \mathcal{F}$) and  that every chain in  $\mathcal{F}$ has  an upper bound. So, by Zorn's lemma,
 $\mathcal{F}$ has a maximal element $(\H_i)_{i\in I}.$ We claim 
 that $\K=\oplus_{i\in I} \H_i.$ Indeed, 
 assume, by contradiction, that $\K':=\oplus_{i\in I} \H_i$ is a proper subspace
 of $\K.$ Then there exists $g\in G$ such that $V_g$ is not contained in $\K'.$
 So, the orthogonal projection $P:V_g\to \K'^\perp$ is non-zero,
 where  $\K'^\perp$ is the orthogonal complement of $\K'$ in $\K.$
Since $P$ is $\pi(H)$-equivariant, there exists non-zero  
 $\pi(H)$-invariant  closed subspaces
$\K_1$ of $V_g$ and $\K_2$ of $\K'^\perp$ and a $\pi(H)$-equivariant bijective
isometry between $\K_1$ and $\K_2$  (see \cite[Proposition A.1.4]{BHV}).
Since  $\pi\vert_H$ restricted to  $V_g$ is mixing, the same holds
for the restriction of $\pi\vert_H$ to  $\K_2.$
So, $(\H_i)_{i\in I}\cup \{\K_2\}\in \mathcal{F},$ and this is a contradiction.
 This proves Item (i).

Observe that $\H^ U$ and $\K$ are $H$-invariant, since 
$H$ normalizes $U.$ 
If $H$ is infinite,  then $H$ has no non-zero invariant vector in $\K$ by (i) and hence 
$\H^H\subset \H^U.$ This proves Item (ii).

If $\H^U=\{0\},$ then $E(X)$ is the identity on $\H$ and Item (iii) follows from Equality $(**).$

\end{proof}

The following lemma was used in the proof of Proposition~\ref{CharacterSemiDirectProduct}.
\begin{lemma}
\label{Lem-FreeAction}
Let $T:X\to X$ be a homeomorphism of a 
Hausdorff topological space $X$. Assume that $T$  has no fixed point in $X.$
 There exists a  family $(B_i)_{i \in I}$ of pairwise
disjoint Borel subspaces  of $X$ with  $X=\bigcup_{i\in I} B_i$  and
such that $B_i \cap T(B_i)=\emptyset$ for every $i\in I.$
\end{lemma}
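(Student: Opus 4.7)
The plan is to produce the family $(B_i)$ in two stages. First, for each $x \in X$, I would construct an open neighborhood $W_x$ of $x$ satisfying $W_x \cap T(W_x) = \emptyset$; then I would use Zorn's lemma to extract a disjoint Borel partition of $X$, every member of which is contained in some $W_x$.

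For the first stage, fix $x \in X$. Since $T(x) \ne x$ and $X$ is Hausdorff, pick disjoint open sets $U \ni x$ and $V \ni T(x)$, and set $W_x := U \cap T^{-1}(V)$. Continuity of $T$ makes $W_x$ open, clearly $x \in W_x$, and $W_x \cap T(W_x) \subseteq U \cap V = \emptyset$. So $\{W_x : x \in X\}$ is an open cover of $X$ each of whose members is disjoint from its $T$-image.

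For the second stage, consider the partially ordered set $\mathcal{F}$ of pairwise disjoint families $\{B_\alpha\}$ of Borel subsets of $X$ such that each $B_\alpha$ is contained in some $W_{x(\alpha)}$, ordered by inclusion. The empty family is in $\mathcal{F}$, and every chain has an upper bound (its union), so Zorn's lemma yields a maximal element $\{B_i\}_{i \in I}$. I would then verify that $\bigcup_i B_i = X$: if some $y \in X$ lay outside this union, the singleton $\{y\}$ would be closed in $X$ (Hausdorff), hence Borel, contained in $W_y$, and disjoint from every $B_i$; appending it would contradict maximality. Finally, for each $i$, the inclusion $B_i \subseteq W_{x(i)}$ forces $B_i \cap T(B_i) \subseteq W_{x(i)} \cap T(W_{x(i)}) = \emptyset$, which is the desired conclusion.

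There is no serious obstacle; the argument is a routine Zorn's lemma exercise, the only subtle input being the Hausdorff-plus-continuity construction of $W_x$. In fact, since singletons in a Hausdorff space are closed and hence Borel, the trivial partition $B_x := \{x\}$ for $x \in X$ already satisfies the statement literally; the Zorn refinement above is still worth recording because it produces Borel sets contained in members of the prescribed cover $\{W_x\}$, a form more directly suited to the spectral-measure computation used in Proposition~\ref{CharacterSemiDirectProduct}.
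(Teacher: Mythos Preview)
Your proof is correct and follows essentially the same Zorn's-lemma strategy as the paper, differing only in bookkeeping: you pre-construct the open cover $\{W_x\}$ and dispose of the maximality step by adjoining a singleton, whereas the paper carves out a more elaborate Borel set from a neighborhood of the uncovered point. Your closing observation that the partition of $X$ into singletons already satisfies the statement verbatim is a valid shortcut the paper does not record.
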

\begin{proof}
Let   $\mathcal{F}$  be the set
of all families  $\{B_i \mid {i\in I}\}$ of pairwise disjoint Borel subspaces $B_i$ of $X$ such that $B_i \cap T(B_i)=\emptyset$ for every $i\in I.$ Let $\leq$ be the partial order
on $\mathcal{F}$ given by $\{B_i \mid {i\in I}\}\leq \{B_j' \mid j\in J\}$ 
if, for every $i\in I,$ there exists $j\in J$ such that $B_i=B_j'.$

The set $\mathcal{F}$ is non empty; indeed, the family consisting of the single
subset $\{\emptyset\}$ belongs to $\mathcal{F}.$ 
  Moreover, every chain in 
 $\mathcal{F}$ has obviously an upper bound and so, by Zorn's lemma,
 $\mathcal{F}$ has a maximal element $\{B_i \mid {i\in I}\}.$

We claim that $X=\bigcup_{i\in I} B_i.$ Indeed, assume, by contradiction,
that there exists $x\in X\setminus \bigcup_{i\in I} B_i.$
Since $x\ne Tx,$ there an open neighbourhood  $U$ of $x$
 such that  $T(x)\notin U$. Consider the Borel subset $B$ of $X$ given by
$$B=\left(U\cap \left(X\setminus \bigcup_{i\in I} B_i\right)\right) \setminus {T^{-1}}(U).$$
It is clear that $T(B)\cap U=\emptyset$ and hence $T(B)\cap B=\emptyset.$
So, the family $\{B\} \cup \{B_i \mid {i\in I}\}$ belongs to $\mathcal{F}$ and 
$\{B_i \mid {i\in I}\} \leq \{B\} \cup \{B_i \mid {i\in I}\}$
Moreover, $\{B_i \mid {i\in I}\} \neq \{B\} \cup \{B_i \mid {i\in I}\},$ since  $x\in B$ and $x\notin  \bigcup_{i\in I} B_i;$ this is a contradiction.

\end{proof}

We will make repeated use of the following elementary lemma.
The commutator of  two elements $g,h$ in a group is $[g,h]= ghg^{-1}h^{-1}.$
\begin{lemma}
\label{Lem-Hilbert}
Let $G$ be a group and $\vfi: G\to \CC$  a 
function of positive type on $G$.
Let $g\in G.$ Assume that there exist a sequence
 $(g_n)_{n\geq 1}$ in $G$  such that 
 \begin{itemize}
\item  $\vfi([g^{-1},g_m]^{-1} [g^{-1},g_n])=0$ for all $m\neq n$ and 
\item $ \vfi(g_n gg_n^{-1})=\vfi(g)$ for all  $n \geq 1.$
\end{itemize}
 Then $\vfi(g)=0.$
  \end{lemma}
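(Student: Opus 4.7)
The plan is to realise $\vfi$ via the GNS construction and rephrase the two hypotheses as statements about a suitable orthogonal sequence in the GNS Hilbert space; Bessel's inequality then forces $\vfi(g)=0$. We may clearly assume $\vfi(e)>0$, since $\vfi(e)=0$ implies $\vfi\equiv 0$ by Cauchy--Schwarz. Let $(\pi,\H,\xi)$ be a GNS-triple for $\vfi$, so that $\vfi(x)=\langle \pi(x)\xi\mid\xi\rangle$ for all $x\in G$ and $\|\xi\|^2=\vfi(e)$.

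I would then introduce the sequence of vectors
$$
w_n:=\pi(g_n g g_n^{-1})\xi\qquad (n\ge 1),
$$
each of norm $\vfi(e)^{1/2}$ because $\pi$ is unitary. The whole argument hinges on the elementary commutator identity
$$
[g^{-1},g_m]^{-1}[g^{-1},g_n]\;=\;g_m g^{-1} g_m^{-1}g_n g g_n^{-1}\;=\;(g_m g g_m^{-1})^{-1}(g_n g g_n^{-1}),
$$
combined with the standard matrix-coefficient formula $\langle w_n\mid w_m\rangle=\vfi\bigl((g_m g g_m^{-1})^{-1}(g_n g g_n^{-1})\bigr)$. This turns the first hypothesis of the lemma into the pairwise orthogonality $\langle w_n\mid w_m\rangle=0$ for $m\neq n$, and the second hypothesis yields $\langle w_n\mid\xi\rangle=\vfi(g_n g g_n^{-1})=\vfi(g)$ for every $n$.

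To conclude, I apply Bessel's inequality to the orthonormal system $(w_n/\|w_n\|)_{n\ge 1}$ in $\H$:
$$
\sum_{n=1}^{\infty}\frac{|\vfi(g)|^2}{\vfi(e)}\;=\;\sum_{n=1}^{\infty}\bigl|\bigl\langle \xi\mid w_n/\|w_n\|\bigr\rangle\bigr|^2\;\le\;\|\xi\|^2\;=\;\vfi(e).
$$
Since the left-hand side is an infinite sum of equal nonnegative terms, each summand must vanish, i.e.\ $\vfi(g)=0$. There is essentially no obstacle in this argument: the only step that deserves care is the bookkeeping in the commutator identity, which is precisely what dovetails the orthogonality hypothesis with the conjugation-invariance hypothesis.
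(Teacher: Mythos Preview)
Your proof is correct and follows essentially the same approach as the paper: both realise $\vfi$ via the GNS triple and produce an orthogonal sequence from the first hypothesis, then use the second hypothesis to identify $\vfi(g)$ as a fixed inner product against that sequence. The only cosmetic differences are that the paper works with the vectors $\pi([g^{-1},g_n])\xi$ (your $w_n$ are just $\pi(g)$ applied to these) and phrases the conclusion via weak convergence to $0$ rather than Bessel's inequality.
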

\begin{proof}
Let $(\pi, \H, \xi)$ be a GNS triple for $\vfi.$ Then
$$
\begin{aligned}
\langle \pi([g^{-1}, g_n])\xi,\pi([g^{-1}, g_m])\xi\rangle &=
\vfi([g^{-1}, g_m]^{-1} [g^{-1}, g_n]) =0
\end{aligned}
$$
for all $m,n$ with $m\neq n$; this shows that $(\pi( [g^{-1}, g_n])\xi)_{n\geq 1}$ 
is an orthonormal sequence and therefore weakly converges to $0$ in $\H.$
The claim follows, since, for every $n\geq1,$
$$
\begin{aligned}
\vfi(g)&=\vfi(g_ngg_n^{-1})= \vfi(g[g^{-1},g_n])=\langle \pi([g^{-1},g_n])\xi,\pi(g^{-1})\xi\rangle.
\end{aligned}
$$
\end{proof}

\section{Some facts about simple algebraic groups}
\label{S-FactsAlgGr}
 Let $\kk$ be any field, and let $G$ be a reductive algebraic group defined over $\kk.$
 Let $G(\kk)$ be  the group of $\kk$-rational points in $G.$  We need to recall a few results about the structure of $G$ and $G(\kk),$ from \cite{Borel-Book} and \cite{Borel-Tits1}.
  
  For a subgroup $H$ of $G,$  we will denote by  $H(\kk)$ the   subgroup $H\cap G(\kk)$ of $G(\kk)$.
  
 Fix a  maximal   torus $\kk$-split torus  $S$  in  $G$ 
 and a maximal torus $T$ in $G$ defined over $\kk$ and containing $S$.
 Let $\Phi=\Phi(T,G)\subset X(T)$ be the root system of $G$
 with respect to $T,$ where  $X(T)$ is the group of rational characters
 of  $T.$

 Let ${}_\kk\Phi=\Phi(S,G)\subset X(S)$ be the set of $\kk$-roots of $G$
 with respect to $S,$ that is, the  set of all non-trivial characters of  $S$ which are the restrictions to  $S$ of elements of $\Phi.$
 
 As usual, ${}_\kk \Phi^+$ (respectively, ${}_k \Phi^-$) 
denotes the set of all positive (respectively,  negative) $\kk$-roots, for a given ordering of 
$X(S).$

Let $\alpha, \beta\in  {}_\kk \Phi^+$ be distinct proportional positive  $\kk$-roots. Then
either $\beta= 2\alpha$ or $\alpha= 2\beta.$

For $\alpha \in \Phi,$ let ${U}_{\alpha}$
 be the one-parameter root subgroup of $G$ corresponding to $\alpha;$
 this is the subgroup characterized by the fact that there exists an isomorphism 
 $\theta_{\alpha}:\mathbf{G}_a\to {U}_\alpha$
 such that 
 $$t \theta_{\alpha}(x) t^{-1}= \theta_{\alpha}(t^\alpha x)
 \tout t\in T, x\in \mathbf{G}_a,$$
where $\mathbf{G}_a$ is the additive group of dimension 1
and where we used the exponential notation to denote the action of $X(T)$ on $T.$

Let $\alpha\in {}_\kk \Phi .$   We denote by $U_{(\alpha)}$ be 
 the group generated by all ${U}_{\beta}$ for $\beta\in \Phi$
 such that the restriction of $\beta$ to  $S$ is a positive integer multiple of $\alpha.$
 
 Set $\alpha'= \alpha$ if  $2\alpha$ is not a $\kk$-root and $\alpha'= 2\alpha$ if  $2\alpha$ is a $\kk$-root. Then $U_{(\alpha')}$  and $U_{(\alpha)}/ U_{(\alpha')}$ have a vector space structure over $\kk$ which coincides with their group structure, such that the action of $t\in S$ on  $U_{(\alpha')}$ and $U_{(\alpha)}/ U_{(\alpha')}$ is given by   scalar  multiplication by $t^{\alpha'}$ and $t^\alpha,$
 respectively. When $2\alpha$ is a $\kk$-root, $U_{(\alpha)}$ is not commutative and  its center is $U_{(\alpha')};$  moreover, $U_{(\alpha)}$
 is $\kk$-isomorphic, as variety, to an affine space.

Let $U^+$ (respectively  $U^-$) be the 
the subgroup generated by all the $U_{(\alpha)}$ for $\alpha\in {}_\kk\Phi^+$
(respectively, $\alpha\in {}_\kk\Phi^-$).
Let $Z_G(S)$ be the centralizer of $S$ in $G$
and $N_G(S)$ its normalizer.
The subgroups  
$$P^+=Z_G(S)\ltimes U^+ \text{ and } P^-=Z_G(S)\ltimes U^-$$
 are minimal $\kk$-parabolic subgroups of $G$ with unipotent radical $U^+$ and 
$U^-$, respectively;
moreover, we have $P^+ \cap P^{-}=  Z_G(S)$ .
All the subgroups introduced above are defined over $\kk.$

The  Weyl group $_{\kk}W= N_G(S)/Z_G(S)$ is finite and every element 
$w\in _{\kk}W$ has a representative in  $N_G(S)(\kk),$  denoted by the same letter $w.$
So, 
$$_{\kk}W= N_G(S)(\kk)/Z_G(S)(\kk);$$
observe that
 $_{\kk}W$  acts on $S$, on $X(S)$, and linearly on the real vector space $X(S)\otimes \RRR.$ We equip $X(S)\otimes \RRR$ with an invariant inner product.
 
The set  ${}_\kk\Phi$ of $\kk$-roots  is a (possibly non reduced) irreducible root system in $X(S)\otimes \RRR,$ whose Weyl group is isomorphic to $_{\kk}W$.

We recall the following fundamental result (see \cite[Theorem 21.15]{Borel-Book}).
\begin{theorem}
\label{Theo-BruhatDec}
\textbf{(Bruhat decomposition)} 
We have 
$$
G(\kk)=\coprod_{w\in _{\kk}W} P^+(\kk) w P^+(\kk)=\coprod_{w\in _{\kk}W} P^-(\kk) w P^-(\kk).
$$
\end{theorem}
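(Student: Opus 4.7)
The plan is to derive the decomposition from the fact that $(P^+(\kk), N_G(S)(\kk))$ forms a Tits system in $G(\kk)$ whose Weyl group is ${}_\kk W$; once this is verified, the Bruhat decomposition is a formal combinatorial consequence of the Tits system axioms. The second equality in the statement then follows by applying the first to $P^-$ in place of $P^+$, which is legitimate since $P^-$ is $G(\kk)$-conjugate to $P^+$ via a representative of the longest element of ${}_\kk W$.

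The easy Tits axioms admit direct verification. A non-trivial element $w \in {}_\kk W$ sends some positive $\kk$-root to a negative one and therefore does not normalize $U^+$; this gives $N_G(S) \cap P^+ = Z_G(S)$ and hence $P^+(\kk) \cap N_G(S)(\kk) = Z_G(S)(\kk)$. For each simple $\kk$-root $\alpha$, the reflection $s_\alpha$ conjugates $U_{(\alpha)}$ into $U_{(-\alpha)} \not\subset P^+$, so $s_\alpha P^+(\kk) s_\alpha \neq P^+(\kk)$. The remaining axiom $s_\alpha P^+(\kk) w \subset P^+(\kk) s_\alpha w P^+(\kk) \cup P^+(\kk) w P^+(\kk)$ reduces to a computation inside the $\kk$-rank one subgroup generated by $U_{(\alpha)}$, $U_{(-\alpha)}$, and $S$.

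The main obstacle is the generation property $G(\kk) = \bigcup_{w \in {}_\kk W} P^+(\kk) w P^+(\kk)$, which is where descent to $\kk$-rational points matters. Geometrically, for each $w \in {}_\kk W$ the multiplication map
$$ U^-_w \times w Z_G(S) U^+ \longrightarrow P^+ w P^+, \qquad U^-_w := U^- \cap w U^+ w^{-1}, $$
is an isomorphism of $\kk$-varieties onto a locally closed subvariety of $G$, where $U^-_w$ is the product of those root subgroups $U_{(\beta)}$ with $\beta \in {}_\kk \Phi^-$ and $w^{-1}\beta \in {}_\kk \Phi^+$. Since $U^-_w$ is a $\kk$-split unipotent group, it is $\kk$-isomorphic to an affine space; hence every $\kk$-point of $P^+ w P^+$ factors as $U^-_w(\kk) \cdot w \cdot Z_G(S)(\kk) \cdot U^+(\kk) \subset P^+(\kk) w P^+(\kk)$. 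Combined with the geometric decomposition $G = \coprod_w P^+ w P^+$ (which is defined over $\kk$ and therefore partitions $G(\kk)$), this yields both surjectivity onto $G(\kk)$ and the disjointness of the double cosets on $\kk$-points, completing the verification of the Tits system and thus the Bruhat decomposition.
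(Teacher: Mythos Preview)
The paper does not prove this theorem at all: it is quoted as a classical fact with a reference to Borel's textbook \cite[Theorem 21.15]{Borel-Book}. Your sketch is essentially the standard argument found there (originating in Borel--Tits), so in spirit you are reproducing the cited proof rather than offering an alternative.

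One technical slip: the group you call $U^-_w = U^- \cap wU^+w^{-1}$ does not parametrize the cell. Indeed, any $u\in U^-_w$ lies in $wU^+w^{-1}$, so $uw\in wU^+\subset wP^+$ and hence $U^-_w\cdot wP^+ = wP^+$, a single coset. The correct factor is $U^+_w := U^+\cap wU^-w^{-1}\subset U^+$, for which the multiplication map $U^+_w\times P^+\to P^+wP^+$, $(u,p)\mapsto uwp$, is a $\kk$-isomorphism onto the Bruhat cell; since $U^+_w$ is a $\kk$-split unipotent group (a product of root groups $U_{(\beta)}$ with $\beta>0$, $w^{-1}\beta<0$), the descent to $\kk$-points goes through exactly as you describe. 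With this correction your argument is fine. Note also that once the geometric cell decomposition and its $\kk$-rationality give $G(\kk)=\coprod_w P^+(\kk)wP^+(\kk)$ directly, the preliminary verification of the Tits-system axioms becomes superfluous.
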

 The following proposition is a special case of \cite[Proposition 3.11]{Borel-Tits1}.
 
 The set of \emph{non-divisible  roots}
 is the subset   ${}_{\kk} \Phi_{\rm nd}$ of  ${}_{\kk} \Phi$ of roots $\alpha$ such that $\alpha/2$ is not a root.
\begin{proposition}
 \label{Prop-UnipotentDirectGen}
 Let $H$ be a connected algebraic subgroup of $U^+$
 normalized by the torus $S$. Then 
 the multiplication map 
 $$H\cap U_{(\alpha_1)}\times \dots \times H\cap U_{(\alpha_n)} \to H $$is an isomorphism of algebraic varieties,
where $\alpha_1, \dots, \alpha_n$ is any ordering of 
 the set $ {}_{\kk} \Phi^+_{\rm nd}$.
  In particular, $U^+$ is $\kk$-isomorphic, as variety, to an affine space.
\end{proposition}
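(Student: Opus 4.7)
The plan is to reduce the statement to the already known product decomposition of $U^+$ itself, and then to use the action of $S$ on $H$ to split $H$ into pieces landing in the individual root subgroups. First, for the case $H=U^+$, the isomorphism
\[
m\colon U_{(\alpha_1)}\times\cdots\times U_{(\alpha_n)}\to U^+
\]
is a standard consequence of the commutation relations $[U_{(\alpha)},U_{(\beta)}]\subset \prod_{i,j\geq 1}U_{(i\alpha+j\beta)}$ (with the product taken over $\kk$-roots) and the nilpotency of $U^+$: one proves it by induction on the length of the ordering, using the filtration of $U^+$ by the normal subgroups generated by those $U_{(\alpha)}$ whose $\kk$-height exceeds a given threshold.

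Second, since $H\subset U^+$ is a connected closed $\kk$-subgroup normalized by the torus $S$, the adjoint action of $S$ on $\mathfrak h:=\mathrm{Lie}(H)$ is diagonalizable, with weights among the positive integral multiples of elements of ${}_\kk\Phi^+_{\rm nd}$. Grouping weights proportional to a common non-divisible root yields
\[
\mathfrak h=\bigoplus_{\alpha\in {}_\kk\Phi^+_{\rm nd}} \mathfrak h_\alpha,\qquad \mathfrak h_\alpha\subset \mathrm{Lie}(U_{(\alpha)}).
\]
The crucial step is to integrate each $\mathfrak h_\alpha$ into a smooth connected $S$-stable closed $\kk$-subgroup $H_\alpha\subset H\cap U_{(\alpha)}$ whose Lie algebra is exactly $\mathfrak h_\alpha$. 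I would do this by choosing a cocharacter $\lambda\colon \mathbf G_m\to S$ with $\langle \lambda,\alpha\rangle$ strictly larger than $\langle\lambda,\beta\rangle$ for every other $\beta\in {}_\kk\Phi^+_{\rm nd}$, and then using a Bia\l ynicki--Birula style contraction argument (taking limits $\lim_{t\to 0}\lambda(t)h\lambda(t)^{-1}$ for $h\in H$) to isolate the $\alpha$-graded piece of $H$ inside $U_{(\alpha)}$.

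Finally, the composition
\[
H_{\alpha_1}\times\cdots\times H_{\alpha_n}\hookrightarrow U_{(\alpha_1)}\times\cdots\times U_{(\alpha_n)}\xrightarrow{\;\sim\;}U^+
\]
lands in $H$, and its tangent map at the identity is the canonical isomorphism $\bigoplus_i\mathfrak h_{\alpha_i}\xrightarrow{\sim}\mathfrak h$; since source and target are smooth affine varieties of the same dimension, the map is an isomorphism onto $H$, proving the proposition. The in particular statement then follows from the case $H=U^+$ together with the fact that each $U_{(\alpha)}$ is $\kk$-isomorphic to an affine space. The main obstacle is the middle step: verifying that $H\cap U_{(\alpha)}$ is smooth, connected, and has Lie algebra $\mathfrak h_\alpha$. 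This is delicate in positive characteristic, and particularly when $2\alpha\in{}_\kk\Phi$, so that $U_{(\alpha)}$ is non-abelian with center $U_{(\alpha')}$ and one must analyse the $\alpha$- and $\alpha'$-weight components separately; this is precisely the technical core of the Borel--Tits reference that the authors cite.
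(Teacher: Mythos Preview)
The paper does not prove this proposition at all: it is simply recorded as a special case of \cite[Proposition~3.11]{Borel-Tits1}, with no further argument. So there is no proof to compare against beyond that citation, and your sketch is an attempt to reconstruct what Borel--Tits do.

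Your outline has the right overall shape, but the ``crucial step'' contains a genuine error. You claim that for each fixed $\alpha\in{}_\kk\Phi^+_{\rm nd}$ one can choose a cocharacter $\lambda$ with $\langle\lambda,\alpha\rangle$ strictly larger than $\langle\lambda,\beta\rangle$ for every other non-divisible positive root $\beta$. This is possible only when $\alpha$ is an extreme point of the convex hull of ${}_\kk\Phi^+_{\rm nd}$, which it usually is not. Already in type $B_2$, with simple roots $\alpha_1,\alpha_2$ and positive roots $\alpha_1,\alpha_2,\alpha_1+\alpha_2,\alpha_1+2\alpha_2$, the root $\alpha_1+\alpha_2$ is the midpoint of $\alpha_1$ and $\alpha_1+2\alpha_2$, so no linear functional is strictly maximized there. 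Even when such a $\lambda$ exists, it must pair negatively with some positive root, and then $\lim_{t\to 0}\lambda(t)h\lambda(t)^{-1}$ does not exist in $U^+$; so the Bia\l ynicki--Birula contraction you describe is not available. The actual Borel--Tits argument (and \cite[14.4]{Borel-Book} in the split case) proceeds instead by induction along an $S$-stable central filtration of $U^+$ whose successive quotients are vector groups with no nonzero $S$-fixed vectors; the image of $H$ in each quotient is then automatically a linear subspace, and one lifts step by step.

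Your last step is also insufficient as written: over a field of positive characteristic, ``smooth of the same dimension with tangent map an isomorphism at one point'' does not imply isomorphism of varieties (you have not established bijectivity, and must rule out purely inseparable phenomena). The inductive filtration argument handles this automatically, which is another reason it is preferred.
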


We will need to determine the structure of the center $Z(U^+)$ of the unipotent radical $U^+$ of $P^+.$

 The set of \emph{non-multipliable roots}
 is the subset   ${}_{\kk} \Phi_{\rm nm}$ of  ${}_{\kk} \Phi$ of roots $\alpha$ such that $2\alpha$ is not a root; so
$${}_{\kk} \Phi_{\rm nm}= \{ \alpha'\mid \alpha\in {}_{\kk} \Phi\},$$ 
where $\alpha'$ is defined as above for a $\kk$-root $\alpha.$
It is clear that  ${}_{\kk} \Phi_{\rm nm}$ is an irreducible and  reduced  root system in $X(S)\otimes \RRR.$

Let $R$ be an irreducible and  reduced  root system in an euclidean space. Recall that $R$
has two lengths of roots when $R$ is of type  $B_n, C_n$ for $n\geq 2$ as well
 as $F_4$ and  $G_2$ and that $R$  has one length of roots in all other cases
 (see \cite[Chap. VI]{Bourbaki}). 
 \begin{proposition}
 \label{Prop-CenterUnipotent}
 Let $\alpha_l'$ be the highest positive root of the root system ${}_{\kk} \Phi_{\rm nm};$
 when ${}_{\kk} \Phi_{\rm nm}$ has two lengths of roots,
 let $\alpha'_s$  be its highest short positive root.
 \begin{itemize}
\item We have
 $Z(U^+)= U_{(\alpha_l')}U_{(\alpha_s')}$,
 when the characteristic of $\kk$ is $2$ and ${}_{\kk} \Phi$
 of type  $B_n, C_n$ or $ F_4$ as  well as when the characteristic of $\kk$ is $3$ and $_{\kk} \Phi$ of type  $G_2.$ 
 \item We have
 $Z(U^+)= U_{(\alpha_l')}$
 in all other cases.
 \end{itemize}
 In particular, the algebraic subgroup $Z(U^+)$  is  connected and  defined over $\kk.$
 \end{proposition}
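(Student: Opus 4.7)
The plan is to combine the Borel--Tits commutator formula \cite{Borel-Tits1},
\[
[U_{(\alpha)}, U_{(\beta)}] \subseteq \langle U_{(p\alpha + q\beta)} : p, q \geq 1,\ p\alpha + q\beta \in {}_\kk\Phi \rangle,
\]
with the $S$-weight decomposition of $U^+$. Since $S$ normalizes $U^+$, the subgroup $Z(U^+)$ is $S$-stable. Using Proposition~\ref{Prop-UnipotentDirectGen} together with the fact that $S$ acts by distinct characters on the factors $U_{(\alpha')}$ and $U_{(\alpha)}/U_{(\alpha')}$ for $\alpha \in {}_\kk\Phi^+_{\rm nd}$, the problem reduces to deciding, for each positive $\kk$-root $\gamma$, whether the corresponding weight subgroup of $U^+$ lies in $Z(U^+)$.

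The inclusion $U_{(\alpha_l')} \subseteq Z(U^+)$ holds in every case. Indeed, $\alpha_l'$ is the highest root not only of ${}_\kk\Phi_{\rm nm}$ but of ${}_\kk\Phi$ itself (long roots dominate their short halves). For any $\beta \in {}_\kk\Phi^+$ and any $p, q \geq 1$, the vector $p\alpha_l' + q\beta$ then strictly exceeds $\alpha_l'$ in the dominance order, hence is not a $\kk$-root; the commutator formula thus gives $[U_{(\alpha_l')}, U_{(\beta)}] = \{e\}$ for every $\beta \in {}_\kk\Phi^+$, and $U^+$ is generated by these $U_{(\beta)}$.

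The heart of the argument is the analysis of $U_{(\alpha_s')}$ in the two-length cases. Using the refined form of the Borel--Tits commutator formula that records the integer coefficients in the structure constants, I would show that for $\beta \in {}_\kk\Phi^+$ with $\beta \ne \alpha_s'$, every potentially non-trivial bracket $[U_{(\alpha_s')}, U_{(\beta)}]$ lies in a root subgroup $U_{(p\alpha_s' + q\beta)}$ whose structure constant carries a factor equal to the ratio of squared root lengths of ${}_\kk\Phi_{\rm nm}$---namely $2$ for $B_n, C_n, F_4$ and $3$ for $G_2$. Precisely in the listed exceptional characteristics these factors vanish, giving $U_{(\alpha_s')} \subseteq Z(U^+)$; in all other characteristics at least one such commutator is non-trivial, and $U_{(\alpha_s')}$ fails to be central.

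Finally, for any weight $\gamma \in {}_\kk\Phi^+$ other than $\alpha_l'$ (and other than $\alpha_s'$ in the exceptional cases), I would exhibit $\beta \in {}_\kk\Phi^+$ and integers $p,q \geq 1$ with $p\gamma + q\beta$ a $\kk$-root carrying a non-vanishing structure constant; the resulting non-trivial commutator shows that the $\gamma$-weight subgroup is not central. The connectedness and $\kk$-rationality of $Z(U^+)$ then follow from those of the individual $U_{(\gamma)}$. The principal obstacle is this third step: one has to carry out a case-by-case inspection of the structure constants for ${}_\kk\Phi_{\rm nm}$ of type $B_n, C_n, F_4, G_2$ to verify exactly when the relevant integer coefficients become divisible by $\mathrm{char}(\kk)$, and to make sure no weight subgroup outside the stated list survives as central.
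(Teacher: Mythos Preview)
Your overall strategy coincides with the paper's: the commutator formula together with a case-by-case inspection of rank-$2$ subsystems of ${}_\kk\Phi_{\rm nm}$ is precisely the content of \cite[Proposition~8.3]{Pink-Larsen}, which the paper cites for the split case and then observes carries over verbatim to the relative root system. So your ``principal obstacle'' (the structure-constant bookkeeping in types $B_n, C_n, F_4, G_2$) is real but already handled in the literature; the paper simply outsources it rather than reproducing it.

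There is, however, a genuine gap in your treatment of connectedness. Proposition~\ref{Prop-UnipotentDirectGen} applies only to \emph{connected} $S$-stable subgroups of $U^+$, so invoking it to reduce the problem to ``decide, for each $\gamma$, whether $U_{(\gamma)}\subseteq Z(U^+)$'' only identifies the identity component $Z(U^+)_0$. Your final sentence---that connectedness of $Z(U^+)$ follows from that of the individual $U_{(\gamma)}$---then presupposes what it is meant to prove: it assumes $Z(U^+)$ has already been exhibited as a product of root subgroups. The paper closes this gap with a separate argument. Once $Z(U^+)_0$ is determined, one writes the multiplication map $Z(U^+)_0 \times U^* \to U^+$ (with $U^*$ the product of the remaining $U_{(\alpha)}$) as a variety isomorphism via Proposition~\ref{Prop-UnipotentDirectGen}. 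Since $Z(U^+)_0$ has finite index in $Z(U^+)$, the set $U^* \cap Z(U^+)$ is finite; the connected torus $S$ must therefore act trivially on it. But $S$ acts on each factor of $U^*$ through a non-trivial character, so the only $S$-fixed point in $U^*$ is the identity. Hence $U^* \cap Z(U^+)=\{e\}$, and every $z\in Z(U^+)$, written as $z=ab$ with $a\in Z(U^+)_0$ and $b\in U^*$, satisfies $b=a^{-1}z\in U^*\cap Z(U^+)=\{e\}$, giving $Z(U^+)=Z(U^+)_0$. This step is not circumvented by your outline and should be added.
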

\begin{proof}
The result  is proved in \cite[Proposition 8.3]{Pink-Larsen} in
 the case where $S=T$ (that is, when $G$ is $\kk$-split).
The proof in this case is based on an analysis of subsystems of rank 2 of
the root system $\Phi=\Phi(T,G),$ in combination with
the fact that $Z(U^+)$  is the direct product, as algebraic variety,
of  the root spaces it contains.

Denote by $H$ the subgroup generated by $\{U_{(\alpha)}\mid \alpha\in{}_{\kk} \Phi_{\rm nm}^+\}$. Observe that the center $Z(H)$ of $H$, being a characteristic subgroup of $H,$ is normalized by  $S$. Using Proposition~\ref{Prop-UnipotentDirectGen}, the proof of \cite[Proposition 8.3]{Pink-Larsen} extends  without any change  to the root system ${}_{\kk} \Phi_{\rm nm}$
and shows that the connected component  $Z(H)_0$
of $Z(H)$  is 
either  $U_{(\alpha_l')}$ or  $U_{(\alpha_l')} U_{(\alpha_s')},$
depending on the type of ${}_{\kk} \Phi_{\rm nm}$ and the characteristic of $\kk,$
as described above. 
Since  $U_{(\alpha')}$ is the center of $U_{(\alpha)}$ for every $\alpha\in  {}_{\kk} \Phi$,
we have $Z(H)=Z(U^+).$

It remains to show that $Z(U^+)$ is connected.
Let $U^*$ be the product of the  $U_{(\alpha)}$, for $\alpha \in {}_{\kk} \Phi^+_{\rm nd}$ 
with  $\alpha'\neq \alpha_l'$ or $\alpha' \notin\{\alpha_l', \alpha_s'\}.$
By Proposition~\ref{Prop-UnipotentDirectGen} again and the result just proved,
 the multiplication map  
 $$Z(U^+)_0 \times U^* \to U^+$$
 is an isomorphism of varieties. Since  $Z(U^+)_0 $ has finite index in $ Z(U^+) $, the subgroup
 $U^*\cap Z(U^+) $ is finite. 
 As $S$ is connected, it follows that $S$ acts trivially on  $U^*\cap Z(U^+)$.
 This implies  that  $U^*\cap Z(U^+)=\{e\}$ and therefore  $Z(U^+)=Z(U^+)_0.$
\end{proof}
\begin{remark}
\label{Rem-Prop-CenterUnipotent}
(i) With the notation as in Proposition~\ref{Prop-CenterUnipotent},
assume that the root system ${}_{\kk} \Phi$ is not reduced. Then 
$ {}_{\kk} \Phi$ is of type $BC_n$ and so has three lengths of roots. As can be checked,
 the  root system  ${}_{\kk} \Phi_{\rm nm}$  is of type  $C_n$ and its short roots are the roots of   $ {}_{\kk} \Phi$ of intermediate length.
 
 \vskip.3cm
 \noindent 
 (ii) The same description  as in  Proposition~\ref{Prop-CenterUnipotent} holds 
 for the  center $Z(U^-)$ of $U^-,$ with $U_{(-\alpha_l')}$
 and $U_{(-\alpha_s')}$ in place of $U_{(\alpha_l')}$
 and $U_{(\alpha_s')}$.
\end{remark}

We will need to consider reductive subgroups of $G$ attached to a $\kk$-root.
\begin{proposition}
\label{Pro-RedRankOne}
Let $\alpha\in {}_k\Phi$ 
and  $G_{(\alpha)}$ the subgroup of $G$ generated by 
$U_{(\alpha)},Z_G(S)$ and $U _{(-\alpha)}.$
\begin{itemize} 
\item[(i)] $G_{(\alpha)}$ is a  reductive group defined over $\kk$
and the set  $\Phi(S, G_{(\alpha)})$ of $\kk$-roots of $S$ on  $G_{(\alpha)}$  
is  $\ZZ\alpha \cap {}_k\Phi.$
\item[(ii)] Let $\sigma_{\alpha}\in {}_{\kk}W$ be the reflection with respect to the hyperplane orthogonal to $\alpha.$
Then $\sigma_{\alpha}$ has a  representative in $ N_G(S)\cap G_{(\alpha)}(\kk)$ and we have 
$$
G_{(\alpha)}(\kk)= P_\alpha^+(\kk)
\amalg U_{(\alpha)}(\kk)\sigma_{\alpha}\ P_\alpha^+(\kk)=  P_\alpha^-(\kk)
\amalg U_{(-\alpha)}(\kk)\sigma_{\alpha}\ P_\alpha^-(\kk)
 $$
 where $P_\alpha^{\pm}=Z_G(S)U_{(\pm \alpha)}.$
\end{itemize}
\end{proposition}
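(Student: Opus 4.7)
The plan is to realize $G_{(\alpha)}$ as a reductive $\kk$-subgroup attached to a closed symmetric subset of the absolute root system $\Phi$, and then to deduce (ii) by applying the Bruhat decomposition (Theorem~\ref{Theo-BruhatDec}) inside $G_{(\alpha)}$ itself, which will turn out to be a $\kk$-rank-one reductive group.

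First I would set $\Psi := \{\beta \in \Phi \mid \beta|_S \in \ZZ\alpha\}$; this subset is symmetric, and closed under root addition (within $\Phi$) because $\beta|_S$ depends additively on $\beta$. By the very definition of $U_{(\pm\alpha)}$, the generators of $G_{(\alpha)}$ are either elements of $Z_G(S)$ (which handles all roots $\beta \in \Phi$ with $\beta|_S = 0$) or elements of some $U_\beta$ with $\beta \in \Psi$ and $\beta|_S \ne 0$. Hence $G_{(\alpha)}$ coincides with the subgroup $\langle Z_G(S), U_\beta : \beta \in \Psi\rangle$, a standard object of the theory: such a subgroup is a connected reductive $\kk$-subgroup of $G$ with maximal torus $T$ and $T$-root system exactly $\Psi$ (see, e.g., \cite[\S 14]{Borel-Book} or \cite[\S 5]{Borel-Tits1}). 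The nontrivial restrictions $\beta|_S$ for $\beta\in\Psi$, namely $\ZZ\alpha \cap {}_\kk\Phi$, are then by construction its $\kk$-roots with respect to $S$, which proves (i).

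For (ii), by (i) the $\kk$-root system of $G_{(\alpha)}$ relative to $S$ consists of elements proportional to $\alpha$, so its Weyl group reduces to $\{1,\sigma_\alpha\}$. Since $\alpha\in{}_\kk\Phi$, the unipotent $\kk$-variety $U_{(\alpha)}$ is non-trivial, so $U_{(\alpha)}(\kk)\ne\{e\}$ and $G_{(\alpha)}$ is $\kk$-isotropic. The classical rank-one construction then produces a representative $w_\alpha \in N_G(S)(\kk) \cap G_{(\alpha)}(\kk)$ of $\sigma_\alpha$: given any $u \in U_{(\alpha)}(\kk)\smallsetminus\{e\}$, there exist unique $u_1, u_2 \in U_{(-\alpha)}(\kk)$ such that $w_\alpha := u_1 u u_2$ normalizes $S$ and acts as $\sigma_\alpha$ on $X(S)$. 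The subgroups $P_\alpha^{\pm}$ are then precisely the two minimal $\kk$-parabolic subgroups of $G_{(\alpha)}$ containing $Z_G(S)$, and Theorem~\ref{Theo-BruhatDec} applied to $G_{(\alpha)}$ with respect to $P_\alpha^+$ yields
$$
G_{(\alpha)}(\kk) = P_\alpha^+(\kk) \amalg P_\alpha^+(\kk)\, w_\alpha\, P_\alpha^+(\kk).
$$

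To pass to the refined form stated in the proposition, I would expand $P_\alpha^+(\kk) = U_{(\alpha)}(\kk)\,Z_G(S)(\kk)$ and use that $w_\alpha$ normalizes $Z_G(S)$ in order to push the left $Z_G(S)(\kk)$-factor across $w_\alpha$, absorbing it into the right $P_\alpha^+(\kk)$; this gives $P_\alpha^+(\kk)\, w_\alpha\, P_\alpha^+(\kk) = U_{(\alpha)}(\kk)\, w_\alpha\, P_\alpha^+(\kk)$. The analogous argument with $P_\alpha^-$, together with the relation $w_\alpha U_{(\alpha)} w_\alpha^{-1} = U_{(-\alpha)}$, yields the second decomposition. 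The main obstacle is the structural identification in (i), which requires careful bookkeeping when the relative root system ${}_\kk\Phi$ is non-reduced, so that both $\alpha$ and $2\alpha$ may occur as $\kk$-roots attached to the same root ray; once this is in hand, the reduction to the rank-one Bruhat decomposition in (ii) is routine.
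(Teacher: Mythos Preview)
Your proposal is correct and follows essentially the same route as the paper: you identify $G_{(\alpha)}$ via the closed symmetric subset $\Psi=\{\beta\in\Phi\mid \beta|_S\in\ZZ\alpha\}$ (the paper cites \cite[Th\'eor\`eme 3.13]{Borel-Tits1} for exactly this), observe that the relative Weyl group of $G_{(\alpha)}$ is $\{e,\sigma_\alpha\}$, and apply the Bruhat decomposition inside $G_{(\alpha)}$. The only difference is that you spell out the rank-one construction of the Weyl representative and the passage from $P_\alpha^+ w_\alpha P_\alpha^+$ to $U_{(\alpha)} w_\alpha P_\alpha^+$, which the paper absorbs into its reference to Theorem~\ref{Theo-BruhatDec}.
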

\begin{proof}
Item (i)  follows from Theorem 3.13 in \cite{Borel-Tits1}, applied to the closed subset 
$ \{\beta \in \Phi(T,G)\mid \beta|_S\in  \ZZ\alpha\}$ 
of $\Phi(T, G)$.

The Weyl group  of $\Phi(S, G_{(\alpha)})$ is $\{e, \sigma_\alpha\}.$
Since $N_{G_{(\alpha)}}(S)$ coincides with $N_G(S)\cap G_{(\alpha)},$
Item (ii) follows from  the result about the Bruhat decomposition (Theorem~\ref{Theo-BruhatDec}), applied to  the reductive group $G_{(\alpha)}.$
\end{proof}

Let  $G(\kk)^+$ be the subgroup of $G(\kk)$ generated by 
the unipotent radicals of  parabolic subgroups of $G$ defined over $\kk.$
Then $G(\kk)^+$ is the subgroup generated by $U^+(\kk)$ and $U^-(\kk)$
(\cite[Proposition 6.2]{Borel-Tits2}).

Let $X_*(S)$ denote the set of multiplicative one-parameter subgroups in $S,$
that is, the set of morphisms $\mathbf{GL}_1\to S$ of algebraic groups.
Elements  in $X(S)$ and $X_*(S)$ are defined over $\kk$ and both $X(S)$ and $X_*(S)$ are free abelian groups of rank equal to $\dim S$. 
The Weyl group ${}_k W$ acts in the obvious way on $X_*(S).$

For $\chi\in X(S), \la\in X_*(S),$ we have
 $\chi\circ \la \in X(\mathbf{GL}_1)$ and so $\chi\circ \la$ is of the form
 $x\mapsto x^r$ for some $r\in \ZZ.$ The map 
$X(S)\times X_*(S)\to \ZZ$ defined by 
$$
\langle \chi, \la \rangle= r \, \text{ if }\,  \chi\circ \la (x)=x^r.
$$
is a duality pairing of $\ZZ$-modules (see \cite[Chap.III, 8.6]{Borel-Book}).

Let $\la\in X_*(S), \alpha\in {}_k\Phi,$ and 
$r=\langle \alpha, \la \rangle.$
Set $r'=r$ if $\alpha$ is non-multipliable and $r'=2r$ otherwise.
For $x\in \kk^*,$ the action of $\la(x)$  on $U_{(\alpha')}$  and $U_{(\alpha)}/ U_{(\alpha')}$, endowed with the vector space structure mentioned above, is given  by
$\la(x)\cdot v= x^{r'} v$ for $v\in U_{(\alpha')}$ 
and  by  $\la(x)\cdot v= x^{r} v$ for  $v\in U_{(\alpha)}/ U_{(\alpha')}.$ 

\section{Proof of Theorem~\ref{Theo-CharSemisimple}}
\label{S:Proof}
Let $G(\kk)$ be the group of $\kk$-points of an algebraic group $G$  defined and quasi-simple over a field $\kk$. We assume that $\kk$ is  infinite and, to avoid trivialities,
that $G(\kk)^+\neq \{e\},$ that is, the maximal $\kk$-split torus $S$  has dimension $>0.$

\subsection{Proof of the first statement}
Let $\vfi:G(\kk)\to \CC$ be a normalized function of positive type. 
Assume that $\vfi$ is $G(\kk)^+$-invariant  and let  $(\pi, \H, \xi)$ be a GNS-triple associated to $\vfi.$

Since $G(\kk)^+$ is normal in $G(\kk),$ 
the closed subspaces 
$$\K_1:=\H^{G(\kk)^+} \quad \text{ and } \quad \K_2:= \K_1^\perp$$
are $\pi(G)$-invariant.
For $i=1,2,$ let $\pi_i$ be the corresponding subrepresentations of $\pi$ defined
on $\K_i$. (Observe that $\K_i$ may be $\{0\}$.)
Writing $\xi= \xi_1 + \xi_2$ with 
$\xi_i \in \K_i,$
we have therefore 
$$\vfi= \vfi_1+\vfi_2,$$ where
$\vfi_i$ is the (not necessarily normalized) function of positive type on 
$G$ given by $\vfi_i(g)= \langle \pi(g)\xi_i\mid \xi_i \rangle.$
It is clear that $\vfi_1$ and hence $\vfi_2$ are $G(\kk)^+$-invariant.
Obviously, we have $\vfi_1|_{G(\kk)^+}=\Un_{G(\kk)^+}$.
In case $\K_i\neq \{0\},$ we have $\xi_i\neq 0$ and the triple $(\pi_i, \K_i, t_i \vfi_i)$ is a GNS-triple for the normalized function of positive type $t_i \vfi_i$, where $t_i=\Vert \xi_i\Vert^{-2}.$

As a result of the previous discussion, upon replacing $\vfi$ by $\vfi_2,$ we may and will assume in the sequel 
that  $\H^{G(\kk)^+}=\{0\}$. 
We have then to show that  $\vfi=0$ outside $Z(G)(\kk)$.

\vskip.3cm

For $\la\in X_*(S)$, we set 
$$H_\la:=\la(\mathbf{GL}_1)\cap G(\kk)^+.$$
Assume that $\la$ is non trivial. Then $\la(\mathbf{GL}_1)$  is a one-dimensional subtorus of $S$; since $\kk$ is infinite, the subgroup $H_\la$
 of $G(\kk)^+$   is  Zariski-dense in $\la(\mathbf{GL}_1)$
(see  \cite[Corollaire 6.8]{Borel-Tits2}); in particular, $H_\la$   is infinite. 
 Let  $\alpha\in {}_k\Phi$  and $r:=\langle \alpha, \la\rangle \in \ZZ.$
 Observe that if $r\ne 0,$ then 
$$\la(\kk^*)\cap \ker \alpha= \{\la(x)\mid x\in \kk^* \text{ with } \ x^r=e\}$$
and hence $H_\la \cap \ker \alpha$  is a finite subset of $H_\la.$

 \vskip.2cm
$\bullet$ {\it First  step.}  Let  $\alpha\in {}_k\Phi_{nm}$ be a non-multipliable root.
Then there exists   a one-parameter subgroup $\la\in X_*(S)$ such that 
$$
\H^{U_{(\alpha)}(\kk)}= H^{H_\la} \qquad\text{and} \qquad
  \H^{U_{(-\alpha)}(\kk)}= H^{H_\la}.
  $$
Indeed, by \cite[3.1(13)]{Tits} (see also the beginning of the proof of Th\'eor\`eme 7.2  in \cite{Borel-Tits1}),   there exists a  homomorphism
$\rho_\alpha: SL_2\to  G$ defined over $k,$  with kernel contained in the center of 
$SL_2,$ such that 
$$
\rho_\alpha
\begin{pmatrix}
1 & t\\ 0 & 1
\end{pmatrix} \in U_{(\alpha)}, 
\hskip.2cm 
\rho_\alpha\begin{pmatrix}
1 & 0 \\ s& 1
\end{pmatrix} \in U_{(-\alpha)}, 
\hskip.2cm 
\text{and} \hskip.2cm 
\rho_\alpha\begin{pmatrix}
x & 0 \\ 0& x^{-1}
\end{pmatrix} \in S.
$$
Let  $L:= \rho_\alpha(SL_2(\kk))$ and observe that 
 $\vfi\circ (\rho_\alpha|_{SL_2(\kk)})$ is a central function of positive type
 on $SL_2(\kk).$
It follows from the classification of the characters of $SL_2(\kk)$ in \cite[Theorem 2.4]{Peterson-Thom} that 
$$\vfi|_{L}=  t \Un_{L}+(1-t) \vfi'$$
for some $t\in [0,1],$ 
where $\vfi'$ is a function of positive type on $L$ with 
$\vfi'=0$ outside the (finite) center of $L.$
Since a similar statement holds for the conjugate function $\vfi^g$ for every $g\in G(\kk),$
this implies (compare with the proof of Proposition~\ref{CharacterSemiDirectProduct}) 
that the restriction of $\pi|_{L}$ to the orthogonal complement of 
$\H^{L}$ is a mixing representation of $L.$ 
Let $\la \in X_*(S)$ be defined by $\la(x)= 
\rho_\alpha\begin{pmatrix}
x & 0 \\ 0& x^{-1}\end{pmatrix}$
and let 
$$V:=  \rho_\alpha \left(\left\{\begin{pmatrix}
1 & t\\ 0 & 1
\end{pmatrix}\mid t\in \kk\right\}\right).
$$
Since  the subgroups $H_\la$ and  $V$ of $L$ are infinite and 
since $\pi|_{L}$ is mixing on $(\H^{L})^\perp,$ we have 
$$\H^{L}= \H^{H_\la}=  \H^{V}.$$

Observe that  $\langle \alpha, \la\rangle=2\neq 0$ and so
 $F:=H_\la \cap \ker \alpha$ is finite.
Recall that the subgroup $U_{(\alpha)}(\kk)$ has the structure of a $\kk$-vector space.
 For  $\chi\in  \widehat{U_{(\alpha)}(\kk)}$ and $x\in \kk^*,$ we have
$$\chi^{\la(x)}(u)=\chi(\la(x)^{-1}\cdot u))=  \chi(x^{-2} u) \tout  u\in U_{(\alpha)}(\kk).$$
Thus,  for every $t\in H_\la\setminus F$ and every
 $\chi\in \widehat{U_{(\alpha)}(\kk)}$ with $\chi\neq\Un_{U_{(\alpha)}(\kk)},$ we have $\chi^t\neq \chi$. Proposition~\ref{CharacterSemiDirectProduct},
applied to the subgroup $H_\la U_{(\alpha)}(\kk)$ of $ G(\kk)^+$,
shows then that  $\H^{H_\la}$ is contained in
$\H^{U_{(\alpha)}(\kk)}$. 

It is obvious that $\H^{U_{(\alpha)}(\kk)}$ is contained in $\H^{V}$
and it was shown before that $\H^{V}= \H^{H_\la}$. As a result, we obtain that
$\H^{U_{(\alpha)}(\kk)}= \H^{H_\la}$.

The same proof  shows also that  $\H^{U_{(-\alpha)}(\kk)}= \H^{H_\la}$.

\vskip.2cm
$\bullet$ {\it Second  step.} 
Let  $\alpha\in {}_k\Phi$ be such that $U_{(\alpha)}$ is either contained in the center
$Z(U^+)$ of $U^+$ or in the center $Z(U^-)$ of $U^-$.
We claim that $\H^{U_{(\alpha)}(\kk)}$ is $G(\kk)$-invariant.

Indeed, assume that  $U_{(\alpha)}\subset Z(U^+),$ that is, $\alpha \in  {}_k\Phi^+.$
Observe that $\alpha$ is non-multipliable (see Proposition~\ref{Prop-CenterUnipotent}).
By the first step, there exists a  one-parameter subgroup
$\la\in X_*(S)$ such that $\H^{U_{(\alpha)}(\kk)}= \H^{U_{(-\alpha)}(\kk)}=H^{H_\la}.$

Observe that $\H^{H_\la}$ is  obviously invariant under $Z_G(S)(\kk)$.
Since  $G(\kk)$ is generated by 
$U^+(\kk), U^-({\kk})$ and $Z_G(S)(\kk),$ it suffices therefore to show
that $\H^{U_{(\alpha)}(\kk)}$ is invariant under  $U_{(\alpha')}(\kk)$ for 
every $\kk$-root $\alpha'.$ 

Let $\alpha' \in  {}_k\Phi^+.$
Since
$U_{(\pm\alpha)}\subset Z(U^\pm),$ it is obvious that $\H^{U_{( \alpha)}(\kk)}$ is invariant under  $U_{(\alpha')}(\kk)$ and that $\H^{U_{(-\alpha)}(\kk)}$ is invariant under  $U_{(-\alpha')}(\kk)$. Since 
$H^{U_{(\alpha)}(\kk)}=H^{U_{(-\alpha)}(\kk)},$  the claim follows
in the case where $U_{(\alpha)}$ is contained in $ Z(U^+).$

 The case where $U_{(\alpha)}$ is contained in $ Z(U^-)$  is entirely 
 similar.

\vskip.2cm
$\bullet$ {\it Third step.} Let  $\alpha\in {}_k\Phi$ be such that $U_{(\alpha)}$ is either contained in $Z(U^+)$ or in  $Z(U^-)$.  We claim that  $\H^{U_{(\alpha)}(\kk)}=\{0\}.$

Indeed, assume, by contradiction, that $\H^{U_{(\alpha)}(\kk)}\neq \{0\}.$ By the second step, 
$\H^{U_{(\alpha)}(\kk)}$ is $G(\kk)$-invariant and so defines a  subrepresentation 
$\rho$ of $\pi.$ The kernel $L$ of $\rho$ is a normal subgroup of $G(\kk)$
containing the non-central subgroup $U_{(\alpha)}(\kk).$  It follows from 
Tits' simplicity theorem (\cite{Tits}) that $L$ contains $G(\kk)^+.$
This is a contradiction, since $\H^{G(\kk)^+}=\{0\},$ by our assumption.

\vskip.2cm

In the sequel, we will prove that $\vfi(g)=0$ for elements 
$g$ from various subsets of $G(\kk);$ for this, we will repeatedly use  Lemma~\ref{Lem-Hilbert} by finding sequences $(g_n)_n$ in $G(\kk)^+$ such
that $\vfi([g^{-1},g_m]^{-1}[g^{-1},g_n])=0$ for all $m\neq n.$ 

\vskip.2cm
$\bullet$ {\it Fourth step.} Let  $\alpha\in {}_k\Phi$ be such that $U_{(\alpha)}$ is either contained in $Z(U^+)$ or in  $Z(U^-)$. We claim that 
$\vfi(g)=0$ for every $g\in U_{(\alpha)}(\kk)$ with $g\neq e.$

Indeed, consider   the subgroup $G_{(\alpha)}$ generated by 
$U _{(\alpha)} ,Z_G(S)$ and  $U_{(-\alpha)}$.
We have
$$G_{(\alpha)}(\kk)= P_\alpha^-(\kk) \amalg U_{(-\alpha)}(\kk)\sigma_{\alpha} P_\alpha^-(\kk),
 $$
 where  $P_\alpha^{-}=Z_G(S)U_{(-\alpha)}$ and 
  $\sigma_{\alpha}$ is the reflection which sends $\alpha$ to $-\alpha$
 (see Proposition~\ref{Pro-RedRankOne}).
 
 Assume that  $U_{(\alpha)}\subset Z(U^+)$ and
 let $g\in U_{(\alpha)}(\kk)\setminus\{e\}.$
 Since  $P^+_{\alpha} \cap P_\alpha^{-}= Z_G(S),$
 we have  $g\notin P_\alpha^{-}$ and hence  
 $$g\in U_{(-\alpha)}(\kk)\sigma_{\alpha} P_\alpha^-(\kk).$$
 So, the conjugate of $g$ by some element 
 from $U_{(-\alpha)}(\kk)\subset G(\kk)^+$ belongs to $\sigma_{\alpha} P_\alpha^-(\kk)$.
 We may therefore assume that $g$ is of the form
 $$
 g= \sigma_{\alpha} \ga u \,\,  \text{ for some } \, \ga\in Z_G(S)(\kk)\, \text{ and  } \, u\in U_{(-\alpha)}(\kk).
 $$ 
 Choose $\la\in X_*(S)$  such that $r:=\langle \alpha, \la\rangle \neq 0$.
The corresponding subgroup $H_{\la}$ of $G(\kk)^+$ is infinite and we can therefore find
a sequence $(x_n)_{n\geq 1}$ in $\kk^*$
such that $\la(x_n) \in G(\kk)^+$  and such that
$$x_m^{2r}\neq x_n^{2r} \tout m\neq n.$$
Consider the multiplicative one-parameter subgroup 
$$\la':= \la^{\sigma_{\alpha}} \la^{-1} \in X_*(S)$$
Observe that  $\langle \sigma_{\alpha}(\alpha), \la\rangle =-r$ and 
hence
$$
 \langle \alpha, \la'\rangle= \langle  \sigma_{\alpha}(\alpha), \la\rangle
-\langle  \alpha, \la\rangle=-2r.
$$
Therefore, we have 
$$
\leqno{(*)}\qquad \la'(x_m)^{-1} \la'(x_n)\notin \ker(-\alpha) \tout m\neq n.$$
Moreover, 
$$\la'(x_n)=\la^{\sigma_{\alpha}}(x_n) \la^{-1}(x_n) \in H_{\la'} \tout n\geq 1,$$
since $G(\kk)^+$ is a normal  subgroup of $G(\kk).$ 
By the third step,  there is no non-zero  $U_{(-\alpha)}(\kk)$-invariant vector in 
$\H$. Therefore, Proposition~\ref{CharacterSemiDirectProduct} applied to 
$H_{\la'} U_{(-\alpha)}(\kk)$  shows that  
$$
\leqno{(**)} \qquad \vfi(hu)=0 \tout h\in H_{\la'}\setminus \ker(-\alpha)\, \text{ and }\,  u\in U_{(-\alpha)}(\kk).
$$
For every $n\geq 1,$ we have
$$
 [\sigma_{\alpha}^{-1}, \la(x_n)]= 
  \la(x_n)^{\sigma_{\alpha}}\la(x_n)^{-1}= \la'(x_n).
  $$
Set $g_n:=[g^{-1},  \la(x_n)];$ then 
  $$g_n=u^{-1}\ga^{-1}[\sigma_{\alpha}^{-1}, \la(x_n)]  \ga \left(\la(x_n)\cdot u\right)= u^{-1}\la'(x_n)  x_n^{-r}u.$$ 
In view of $(*)$ and $(**)$, we have therefore
$$
\begin{aligned}
\vfi(g_m^{-1} g_n)= \vfi\left(\la'(x_m)^{-1}\la'(x_n) ((x_n^{-r}-x_m^{-r}) u)\right)= 0
\end{aligned}
$$
for all $n\neq m$ and the claim follows.

The case where  $U_{(\alpha)}\subset Z(U^-)$ is treated similarly.

\vskip.2cm
$\bullet$ {\it Fifth step.} We claim that 
$\vfi(g)=0$ for every $g\in Z(U^\pm)(\kk)$ with $g\neq e.$

We will only  treat the case of $Z(U^+),$ 
the case of $Z(U^-)$ being entirely similar.

By Proposition~\ref{Prop-CenterUnipotent},
we have either $Z(U^+)(\kk)= U^+_{(\alpha)}(\kk)$
or  $Z(U^+)(\kk)= U^+_{(\alpha)}(\kk)U^+_{(\beta)}(\kk)$
for distinct roots  $\alpha, \beta\in {}_k\Phi^+_{\rm nm}$. 
In the first case, the claim follows from the fourth step.

So, we may assume that $Z(U^+)(\kk)= U_{(\alpha)}(\kk)U_{(\beta)}(\kk)$
for roots $\alpha\neq \beta$  as above.
Let $g\in Z(U^+)(\kk)$ with $g\neq e.$ 
Then $g=uv$ for $u\in U_{(\alpha)}(\kk)$ and $v\in U_{(\beta)}(\kk)$

If $u=e,$ then $g\in  U_{(\beta)}(\kk)$ and therefore $\vfi(g)=0,$
by the fourth step. We can therefore  assume that $u\neq e.$

Since $\alpha$ and $\beta$ are non proportional, we can find
a one-parameter subgroup $\la\in X_*(S)$ such that 
$$
\langle \beta, \la\rangle= 0\,\, \text{ and } \,\,\langle \alpha, \la\rangle=r\neq 0.
$$
Choose  a sequence $(x_n)_{n\geq 1}$ in $\kk^*$
such that $\la(x_n) \in G(\kk)^+$ and such that 
$$x_m^{r}\neq  x_n^{r} \tout m\neq n.$$
We have 
$$
 \la(x_n)\cdot v= v \, \text{ and }\,   \la(x_n)\cdot u= x_n^r u.
$$
and, with  $g_n:=[g^{-1}, \la(x_n)],$ 
$$g_n= v^{-1}u^{-1}(x_n^r u)v \tout n\geq 1.$$
Therefore, we have, by the fourth step,
$$
\vfi(g_m^{-1} g_n)= \vfi(v^{-1} (x_m^r u)^{-1} (x_n^r u) v)=\vfi((x_n^{r}- x_m^{r})u)= 0
$$
for all $m\neq n$  and the claim follows.

\vskip.2cm
For subgroups $A$ and $B$ of a group, $[A,B]$ denotes the subgroup generated by the commutators $[a,b]$ for $a\in A, b\in B.$

Recall that   the descending central series $(\mathcal{C}^i(U^+))_{0\leq i\leq n}$ of 
the nilpotent group $U^+$ is inductively defined by 
$$
\mathcal{C}^0(U^+)= U^+ \, \text{and } \,\, \mathcal{C}^{i+1}(U^+)= [U^+, \mathcal{C}^i(U^+)] \text{ for } 0\leq i\leq n-1,
$$
where $n$ is  the smallest integer $i\geq 1$ with $\mathcal{C}^{i}(U^+)=\{e\}.$
Every $\mathcal{C}^i(U^+)$ is an algebraic normal subgroup of $U^+$ defined over $\kk.$

\vskip.2cm
$\bullet$ {\it Sixth step.}  We claim that 
$\vfi(g)=0$ for every $g\in U^\pm(\kk)$ with $g\neq e.$ 

We will only treat the case of $U^+.$
Let $i\in \{1, \dots, n-1\}$. Assume that 
$$\vfi(u)=0 \tout u\in \mathcal{C}^{i+1}(U^+)(\kk)\setminus \{e\}.$$
Let $g\in \mathcal{C}^{i}(U^+)(\kk)$.
We are going to show that  $\vfi(g)=0$.
Once proved,  the claim will follow since 
$\mathcal{C}^{0}(U^+)=U^+$ and  $\mathcal{C}^{n}(U^+)=\{e\}.$

In view of the fifth step, we can assume that  $g\notin Z(U^+).$ 
The map
$$
f:U^+\to U^+, \quad u\mapsto [g^{-1}, u]
$$
is morphism of  algebraic varieties and is defined over $\kk$.  Since $g\in  \mathcal{C}^{i}(U^+)$ and $g\notin Z(U^+),$
 the image $f(U^+)$  of $f$ is contained in $\mathcal{C}^{i+1}(U^+)$ and is distinct from $\{e\}.$
 The Zariski-closure of $f(U^+)$ is an irreducible $\kk$-subvariety of $U^+$
 (see proof of \cite[2.2 Proposition]{Borel-Book}). In particular, the set 
 $f(U^+)$ is infinite.
 
Since ${U}^+(\kk)$ is Zariski dense in ${U}^+$
(see Proposition~\ref{Prop-UnipotentDirectGen}
or Theorem 21.20 (i) in \cite{Borel-Book}), 
 we can find a sequence $(u_n)_{n\geq 1}$ in ${U}^+(\kk)$ such that 
$f(u_m)\neq f(u_n)$, that is, such that
$$
[g^{-1}, u_m] \neq [g^{-1}, u_n] \tout n\neq m.
$$
Then 
$$[g^{-1}, u_m]^{-1}[g^{-1}, u_n] \in \mathcal{C}^{i+1}(U^+)(\kk)\setminus \{e\};
$$
hence, by our assumption, we have
$$
\vfi([g^{-1}, u_m]^{-1}[g^{-1}, u_n])=0 \tout m\neq n
$$
and the claim follows.
 
\vskip.2cm
$\bullet$ {\it Seventh step.} Let $t\in S(\kk)$,
with $t\notin Z(G).$
We claim that  $\vfi(t)=0$.

Since $t\notin Z(G),$ there exists $\alpha\in {}_k\Phi$
 such that  $t^\alpha\neq 1;$ indeed,  otherwise 
 $t$ would centralize $U^+, U^-$ and $Z_G(S)$ and would
 therefore belong to $Z(G)(\kk).$
 
Since $\kk$ and hence $U_{(\alpha)}(\kk)$ is infinite,
we can find   a sequence $(u_n)_{n\geq 1}$ in $U_{(\alpha)}(\kk)$ with 
$u_n\neq u_m$ for $n\neq m.$
We are going to show  that 
$$
 [t^{-1}, u_m]\neq [t^{-1}, u_n] \tout m\neq n.
 $$
Assume first that $\alpha$ is non-multipliable.
Then 
$$
[t^{-1}, u_m]= (t^{-\alpha}-1)u_m\neq (t^{-\alpha}-1)u_n= [t^{-1}, u_n] \tout m\neq n.
$$
Assume next that $2\alpha$ is a root. We see in a similar way
that the images of $[t^{-1}, u_m]$ and $[t^{-1}, u_n]$ in  $U_{(\alpha)}/U_{(2\alpha)}$ 
are distinct. 

Therefore, by the fourth step, we have
 $$\vfi ([t^{-1}, u_m]^{-1}[t^{-1}, u_n])=0 \tout m\neq n
 $$ 
and the claim follows.

\vskip.2cm
$\bullet$ {\it Eighth step.} Let $g\in S(\kk)U^\pm(\kk)$ with $g\notin Z(G).$
We claim that  $\vfi(g)=0$.

We have $g=t u$ for some $t\in S(\kk)$ and $u\in U^\pm(\kk).$
By  the sixth and the seventh step, we can assume that $t\neq e$
and that $u\neq e.$  We treat only the case where $u\in U^+(\kk).$

 Write $u=\prod_{\alpha \in  {}_k\Phi^+} u_{\alpha}$,
 with $u_\alpha\in U_{(\alpha)}$ (see Proposition~\ref{Prop-UnipotentDirectGen}).
Let $\alpha_0\in  {}_k\Phi^+$  be such that  $u_{\alpha_0}\neq e.$
Choose $\la\in X_*(S)$  such that $r:=\langle \alpha_0, \la\rangle \neq 0$
and a sequence $(x_n)_{n\geq 1}$ in $\kk^*$ such that 
$\la(x_n)\in G(\kk)^+$ and $x_m^r\neq x_n^r$ for all $m\neq n.$
One checks, as in the seventh step, that 
$$
\la(x_m) \cdot u_{\alpha_0}\neq \la(x_n) \cdot u_{\alpha_0} \tout n\neq m.
$$ 
Then 
$$
[g^{-1}, \la(x_m)] =  u^{-1} \prod_{\alpha \in {}_k\Phi^+ }\la(x_m)\cdot u_{\alpha}\neq u^{-1}\prod_{\alpha \in  {}_k\Phi^+}\la(x_n)\cdot u_{\alpha}= [g^{-1}, \la(x_n)];
$$
hence, by the sixth step,
$$
\vfi([g^{-1},\la(x_m)]^{-1}[g^{-1}, \la(x_n)] ) =0 \tout m\neq n
$$
and the claim follows.

\vskip.2cm
$\bullet$ {\it Ninth step.} Let $g\in G(\kk)\setminus Z_G(S)$.
We claim that  $\vfi(g)=0$.

We follow the strategy used  in the fourth step.
Recall that $P^\pm = Z_G(S) U^\pm.$
Since  $Z_G(S)= P^+ \cap P^-,$ 
we have either $g\notin P^+(\kk)$ or $g\notin P^-(\kk).$
Assume that, say, $g\notin P^+(\kk).$
It follows from the Bruhat decomposition
(Theorem~\ref{Theo-BruhatDec}) that $g\in U^+(\kk) w P^+(\kk)$
for some $w \in {}_{\kk}W $ with $w\neq e.$
So, the conjugate of $g$ by some element 
 from $U^+(\kk)\subset G(\kk)^+$ belongs to $w P^+(\kk)$.
We can therefore assume that $g$ is of the form 
$$
g= w\ga u \,\, \text{ for some } \ga\in Z_G(S)(\kk) \text{ and } u\in U^+(\kk).
$$

Since $w$ is non trivial, there exists $\la\in X_*(S)$ such that $\la^{w^{-1}}\neq \la.$
So, $\la':= \la^{w^{-1}}\la^{-1}$ is a non trivial one-parameter subgroup 
in $S$  and  we can therefore find
a sequence $(x_n)_{n\geq 1}$ in $\kk^*$ such that 
$\la(x_n)\in G(\kk)^+$ and such that
$$\la'(x_m)^{-1} \la'(x_n)\notin Z(G)(\kk)  \tout m\neq n.$$
We have 
$$
[w^{-1} , \la(x_n)]= \la'(x_n) \tout n\geq 1,
$$
and, setting $u_n:= \la(x_n)\cdot u \in U^+(\kk)$, 
 $$
 [g^{-1}, \la(x_n)]= u^{-1}  \la'(x_n) u_n \tout n\geq 1.
 $$
By the eighth step, it follows that
 $$
 \vfi([g^{-1}, \la(x_m)]^{-1}[g^{-1}, \la(x_n)])= \vfi(\la'(x_m)^{-1} \la'(x_n) (u_n u_m^{-1}))=0
 $$
 for all $m\neq n$ and the claim follows.
  
\vskip.2cm
$\bullet$ {\it Tenth step.} Let $g\in Z_G(S)$ with $g\notin Z(G)(\kk)$.
We claim that $\vfi(g)=0.$

Indeed, in view of the ninth step, it suffices to show
that $G(\kk)\setminus Z_G(S)$ contains a conjugate of $g$
under $G(\kk)^+.$
Assume, by contradiction, that every $G(\kk)^+$-conjugate of
$g$ is contained in $Z_G(S).$ 
Then the subgroup $L$ generated by the $G(\kk)^+$-conjugates of
$g$ is contained in   $Z_G(S).$
Since $g\notin Z(G)(\kk)$,  Tits' simplicity theorem
implies  that $L$ contains $G(\kk)^+$. Hence, $G(\kk)^+$ is contained in 
$Z_G(S)$. This is a contradiction, since  $S$ is non trivial.

\subsection{Proof of the second statement}
The first statement of Theorem~\ref{Theo-CharSemisimple}
implies that every $\vfi\in \Char(G(\kk))$  either factorizes
through $G(\kk)/G(\kk)^+$ or is of the form $\widetilde{\chi}$
for some $\chi \in \widehat{Z(G)(\kk)}.$

Conversely, let $\chi \in \widehat{Z(G)(\kk)}.$
We claim that $\widetilde{\chi}$, which clearly is a $G(\kk)$-invariant
function from $P(G(\kk)),$  is indecomposable.
Let $\vfi_1, \vfi_2$ be two $G(\kk)$-invariant
functions from $P(G(\kk))$ and $t\in (0,1)$ such that
$\widetilde{\chi}= t\vfi_1+(1-t)\vfi_2.$

On the one hand,  by the first statement of 
Theorem~\ref{Theo-CharSemisimple}, each $\vfi_i$
is a convex combination of normalized functions of positive type 
$\psi_i^{(1)}, \psi_i^{(2)}$ with $\psi_i^{(1)}=1$ on 
$G(\kk)^+$ and $\psi_i^{(2)}=0$ outside $Z(G)(\kk).$
Since $\widetilde{\chi}=0$ outside $Z(G)(\kk),$
this implies that $\vfi_i=0$ outside $Z(G)(\kk)$ for $i=1,2.$

On the other hand, since $\widetilde{\chi}|_{Z(G)(\kk)}=\chi \in \Char(Z(G)(\kk)),$
the functions $\vfi_1$ and $\vfi_2$ coincide with $\chi$
on $Z(G)(\kk).$ Hence $\vfi_1=\vfi_2=\widetilde{\chi}.$

\section{Extension of the results and proofs of corollaries}
\label{ExtensionReductive}
\subsection{Extension to semi-simple groups}
  We extend Corollary~\ref{Cor-CharSemisimple} to semi-simple groups.
  First recall the following  general result 
 (see \cite[Satz]{Thoma2}). Let  $\Ga=\Ga_1\times \dots  \times\Ga_n$ be a direct product  of discrete groups.
 Then 
 $$\Char(\Ga)=\left\{ \vfi_1\otimes \cdots \otimes \vfi_n\mid \vfi_1\in \Char(\Ga_1),\dots \vfi_n\in \Char(\Ga_n)\right\},$$
 where $(\vfi_1\otimes \cdots \otimes\vfi_n)(\ga_1,\dots, \ga_2)= \vfi_1(\ga_1)\cdots \vfi_n(\ga_n)$.

Let  $G$ be a connected semi-simple algebraic group defined 
over a field $\kk.$ 
As in the case of simple groups,  define $G(\kk)^+$  as  the  subgroup of $G(\kk)$ generated by the unipotent radicals of parabolic subgroups of $G$ defined over $\kk.$ 

Let $G_1, \dots, G_r$  be the connected almost $k$-simple  and $k$-isotropic normal   $k$-subgroups of $G$. Then $G(\kk)^+$
is the almost direct product of the  $G_i(\kk)^+$'s, 
that is, the product map 
$$p:G_1^+(\kk)\times \cdots \times G_r(\kk)^+\to G(\kk)^+$$
is a surjective homomorphism with finite kernel (see \cite[6.2]{Borel-Tits2}).
In particular,  we may identify $\Char(G(\kk)^+)$
with a subset of characters of $G_1^+(\kk)\times \cdots \times G_r(\kk)^+$.

The following result is an immediate consequence of these remarks.
\begin{proposition}
\label{Theo-Extension}
With $\kk, G, G_1,\dots ,G_r,$ and $p$  as above, 
$\Char(G(\kk)^+)$
 coincides with the set functions $\vfi$ 
of the form $\vfi=\vfi_1 \otimes \cdots \otimes\vfi_r$,
with $\vfi_i \in \Char(G_i(\kk)^+)$ for every $1\leq i\leq r$ and such that $\vfi=1$ on $\ker p.$
\end{proposition}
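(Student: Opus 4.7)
The plan is to reduce to Thoma's tensor product formula (recalled just above the statement) via the observation that the multiplication map $p$ has \emph{central} kernel. Writing $\Ga := G_1(\kk)^+ \times \cdots \times G_r(\kk)^+$, Thoma's theorem identifies $\Char(\Ga)$ with the set of tensor products $\vfi_1 \otimes \cdots \otimes \vfi_r$ for $\vfi_i \in \Char(G_i(\kk)^+)$. So it suffices to show that pullback $\vfi \mapsto \vfi \circ p$ is a bijection between $\Char(G(\kk)^+)$ and the subset of $\Char(\Ga)$ consisting of those characters that equal $1$ on $\ker p$; granting this, the proposition follows by intersecting Thoma's parametrization with the condition $\vfi_1 \otimes \cdots \otimes \vfi_r \equiv 1$ on $\ker p$.

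First I would check that $\ker p \subset Z(\Ga)$. An element $(g_1,\dots,g_r)\in \ker p$ satisfies $g_1\cdots g_r = e$ in $G(\kk)^+$; since the $G_i$ pairwise commute in the almost direct product and meet pairwise in finite central subgroups, each $g_i$ lies in $G_i \cap \prod_{j\neq i} G_j \subset Z(G_i(\kk)^+)$, whence $\ker p \subset Z(\Ga)$. Next I would construct the bijection. If $\vfi\in \Char(G(\kk)^+)$, then $\vfi\circ p$ is plainly a central normalized function of positive type on $\Ga$ equal to $1$ on $\ker p$. Conversely, given a central $\psi\in P(\Ga)$ with $\psi|_{\ker p}=1$, its GNS triple $(\pi,\H,\xi)$ satisfies $\pi(z)\xi = \xi$ for every $z\in \ker p$ (the equality case of $|\psi(z)|\leq \psi(e)=1$ applied to the unit vector $\xi$); since $z\in Z(\Ga)$, cyclicity of $\xi$ forces $\pi(z)=I$, so $\psi$ descends to a $\vfi\in P(G(\kk)^+)$, which is central because $p$ is surjective.

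Finally, extremality is preserved in both directions. If $\vfi\circ p = t\psi_1 + (1-t)\psi_2$ in $\Tr(\Ga)$ with $0<t<1$, then $t\psi_1(z) + (1-t)\psi_2(z) = 1$ for $z\in \ker p$; since $\Re \psi_i(z)\leq |\psi_i(z)|\leq 1$, this forces $\psi_i|_{\ker p} = 1$, so each $\psi_i$ descends to some $\vfi_i \in \Tr(G(\kk)^+)$ with $\vfi = t\vfi_1 + (1-t)\vfi_2$; extremality of $\vfi$ yields $\vfi_1=\vfi_2=\vfi$. The reverse direction is immediate by composing with $p$. The main (and essentially only) obstacle is the central-kernel verification together with the positive-definite equality-case argument that lets a character trivial on $\ker p$ descend; once these are in place, the result is an immediate formal combination of Thoma's formula with the correspondence $\Char(G(\kk)^+)\leftrightarrow \{\psi\in \Char(\Ga) : \psi|_{\ker p}=1\}$.
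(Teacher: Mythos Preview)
Your proposal is correct and follows exactly the approach the paper intends: the paper states the result as ``an immediate consequence'' of Thoma's tensor-product formula together with the fact that $p$ is surjective with finite kernel, and you have supplied precisely the details (centrality of $\ker p$, the GNS equality-case descent, and preservation of extremality) that make this immediacy explicit. One minor remark: since $\ker p$ is automatically normal, the descent of a positive-definite function with $\psi|_{\ker p}=1$ already follows from the $G$-invariance of $\H^{\ker p}$, so your centrality verification, while correct and natural, is slightly stronger than strictly needed.
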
 

\subsection{Proof of Corollary~\ref{Cor-CharSemisimple}}
Let $\vfi\in \Char(G(\kk)^+).$ Then $\widetilde{\vfi}\in P(G(\kk))$ and 
$\widetilde{\vfi}$ is $G(\kk)^+$-invariant. Hence, by Theorem~\ref{Theo-CharSemisimple},
$\widetilde{\vfi}$ is a convex combination of  two functions $\vfi_1$ and $\vfi_2$
from $P(G(\kk))$ with $\vfi_1=1$ on $G(\kk)^+$ and $\vfi_2=0$ outside $Z(G)(\kk).$
Since $\vfi$ is indecomposable, it follows
that either $\vfi=\Un_{G(\kk)^+}$ or $\vfi=\widetilde{\chi}$ 
for some $\chi$ in the dual of $Z(G) \cap G(\kk)^+.$
Since, as observed in Remark~\ref{Rem-Cor-CharSemisimple2},
 $Z(G) \cap G(\kk)^+= Z(G(\kk)^+),$ this completes the proof.

\subsection{Proof of Corollary~\ref{Cor-CharSemisimple2}} 
(i)  Observe first that $PG(\kk)^+$ is a so-called ICC group,
that is, every conjugacy class, except $\{e\}$, is infinite.
Indeed, by Tits simplicity theorem, $PG(\kk)^+$ has no 
proper subgroup of finite index.
This implies that $L(PG(\kk)^+)$ is a factor (see Lemma 5.3.4 in \cite{Murray-VN}).
Using Corollary~\ref{Cor-CharSemisimple}, the claim is proved  as in  \cite[Theorem 3.5]{Peterson-Thom}.

\noindent
(ii) Observe that $PG(\kk)^+$ is countable, since $\kk$ is assumed to be
countable. As $PG(\kk)^+$ is an ICC group, the claim follows
from Corollary~\ref{Cor-CharSemisimple} and   \cite[Theorem 3.2]{Peterson-Thom}
(or \cite[Theorem 2.12]{Dudko-Medynets}).

\subsection{Proof of Corollary~\ref{Cor-Theo-CharSemisimple3}} 
Item (i) is an immediate consequence of  Corollary~\ref{Cor-CharSemisimple} and Remark~\ref{Rem-Cor-CharSemisimple2}.iii.

To show Item (ii), let $\kk$ be a global field and let $G(\kk)\curvearrowright (X,\mu)$ be a measure preserving ergodic action on a probability space $(X,\mu).$
Let $X'$ be the set of points $x\in X$ with finite $G(\kk)^+$-orbit.
By Tits' simplicity theorem, $X'$ coincides with the $G(\kk)^+$-fixed points in $X.$
Since $X'$ is $G(\kk)$-invariant, we have either $\mu(X')=1$ or $\mu(X')=0,$
by ergodicity of  $G(\kk)\curvearrowright (X,\mu)$. In the first case, the action 
of $G(\kk)$ factorizes through $G(\kk)/ G(\kk)^+= G(\kk)_{\rm ab}.$
In the second case, the $G(\kk)$-orbit of $\mu$-almost every point in $X$ is infinite.
Since $G(\kk)$ is a countable ICC-group, it follows from
Item (i) and the proof of  \cite[Theorem 3.2 ]{Peterson-Thom} that $G(\kk)\curvearrowright (X,\mu)$ is essentially free. 

\end{document}